%
%
%
%
%
\RequirePackage{fix-cm}
\documentclass[smallcondensed]{svjour3}     
\smartqed  
\usepackage{graphicx}
\usepackage{amssymb,latexsym}

%
%
%
%
\begin{document}

\title{CLOSE-TO-REGULARITY AND COMPLETELY REGULARITY OF BOUNDED TRI-LINEAR MAPS
}
\subtitle{Do you have a subtitle?\\ If so, write it here}


\author{Abotaleb Sheikhali \and Ali Ebadian\and Kazem Haghnejad Azar 
}


\institute{A. Sheikhali \at
Department of Mathematics, Payame Noor University (PNU), Tehran, Iran.\\
\email{Abotaleb.sheikhali.20@gmail.com}
                                   \and
           A. Ebadian \at
              \email{Ebadian.ali@gmail.com}
               \and
           K. Haghnejad Azar \\
           Department of Mathematics, University of Mohaghegh Ardabili\\
\email{Haghnejad@uma.ac.ir}
}

\date{Received: date / Accepted: date}

\maketitle

\begin{abstract}
Let $f:X\times Y\times Z\longrightarrow W $ be a bounded tri-linear map on normed spaces. We say that $f$ is close-to-regular when $f^{t****s}=f^{s****t}$ and we say that $f$ is completely regular when all natural extensions are equal. In this manuscript, we have some results on the close-to-regular maps and investigate the close-to-regularity of tri-linear maps. We investigate the relation between Arens regularity of bounded bilinear maps and close-to-regularity bounded tri-linear maps. We give a simple criterion for the completely regularity of tri-linear maps.  We provide a necessary and sufficient condition such that the fourth adjoint $D^{****}$  of a tri-derivation is again a tri-derivation.

\keywords{Arens product \and Close-to-regular \and Completely regular \and Tri-derivation \and Tri-linear mappings.}
 \subclass{MSC 46H25  \and MSC 46H20\and MSC 17C65 }
\end{abstract}

\section{Introduction}
\label{intro}
Let $X, Y, Z$ and $W$ be normed spaces and $f:X\times Y\times Z\longrightarrow W $ be a bounded tri-linear mapping. The natural extensions  of $f$ are  as following:
\begin{enumerate}
\item $f^{*}:W^{*}\times X\times Y\longrightarrow Z^{*}$, given by $\langle f^{*}(w^{*},x,y),z\rangle=\langle w^{*},f(x,y,z)\rangle$ where $x\in X, y\in Y, z\in Z, w^{*}\in W^{*}$. 

The map $f^*$ is a bounded tri-linear mapping and is said  the adjoint of $f$.

\item $f^{**}=(f^*)^*:Z^{**}\times W^{*}\times X\longrightarrow Y^{*}$, given by $\langle f^{**}(z^{**},w^{*},x),y\rangle=\langle z^{**}, f^{*}(w^{*},\\x,y)$ where $x\in X, y\in Y, z^{**}\in Z^{**}, w^{*}\in W^{*}$.

\item $f^{***}=(f^{**})^*:Y^{**}\times Z^{**}\times W^{*}\longrightarrow X^{*}$, given by $\langle f^{***}(y^{**},z^{**},w^{*}),x\rangle=\langle y^{**},  f^{**}(z^{**},w^{*},x) \rangle$ where $x\in X, y^{**}\in Y^{**}, z^{**}\in Z^{**}, w^{*}\in W^{*}$.

\item $f^{****}=(f^{***})^*:X^{**}\times Y^{**}\times Z^{**}\longrightarrow W^{**}$, given by $\langle f^{****}(x^{**},
y^{**},z^{**})$, $w^{*}\rangle =\langle x^{**}, f^{***}(y^{**},z^{**},w^{*}) \rangle$ where $x^{**}\in X^{**}, y^{**}\in Y^{**}, z^{**}\in Z^{**}, w^{*}\in W^{*}$.
\end{enumerate}
The bounded tri-linear map $f^{****}$ is the unique extension of $f$ such that the maps 
\begin{eqnarray*}
&&x^{**}\longrightarrow f^{****}(x^{**},y^{**},z^{**}):X^{**}\longrightarrow W^{**},\\
&&y^{**}\longrightarrow f^{****}(x,y^{**},z^{**}):Y^{**}\longrightarrow W^{**},\\
&&z^{**}\longrightarrow f^{****}(x,y,z^{**}):Z^{**}\longrightarrow W^{**},
\end{eqnarray*}
are weak$^{*}-$weak$^{*}$ continuous for each $x\in X, y\in Y, x^{**}\in X^{**}, y^{**}\in Y^{**}$ and $z^{**}\in Z^{**}$.
Now let 
\begin{eqnarray*}
&&f^i:Y\times X\times Z\longrightarrow W : f^i(y,x,z)=f(x,y,z),\\
&&f^j:X\times Z\times Y\longrightarrow W : f^j(x,z,y)=f(x,y,z),\\
&&f^r:Z\times Y\times X\longrightarrow W : f^r(z,y,x)=f(x,y,z),\\
&&f^t:Z\times X\times Y\longrightarrow W : f^t(z,x,y)=f(x,y,z),\\
&&f^s:Y\times Z\times X\longrightarrow W : f^s(y,z,x)=f(x,y,z),
\end{eqnarray*}
be the flip maps of $f$, for every $x\in X ,y\in Y$ and $z\in Z$. The flip maps of $f$ are bounded tri-linear maps. Also $f^{st}, f^{ts}, f^{ttt}$ and $f^{sss}$ are bounded tri-linear maps from $X\times Y\times Z$ to $W$.
It is easily seen that $f^{i****i}, f^{j****j}, f^{r****r}, f^{t****s}$ and $f^{s****t}$ are natural extensions of $f$ such that bounded linear operators
\begin{eqnarray*}
&&x^{**}\longrightarrow f^{i****i}(x^{**},y,z^{**}):X^{**}\longrightarrow W^{**}, \\
&&y^{**}\longrightarrow f^{i****i}(x^{**},y^{**},z^{**}):Y^{**}\longrightarrow W^{**},\\
&&z^{**}\longrightarrow f^{i****i}(x,y,z^{**}):Z^{**}\longrightarrow W^{**},\\
&&x^{**}\longrightarrow f^{j****j}(x^{**},y,z^{**}):X^{**}\longrightarrow W^{**}, \\
&&y^{**}\longrightarrow f^{j****j}(x,y^{**},z):Y^{**}\longrightarrow W^{**},\\
&&z^{**}\longrightarrow f^{j****j}(x,y^{**},z^{**}):Z^{**}\longrightarrow W^{**},\\
&&x^{**}\longrightarrow f^{r****r}(x^{**},y,z):X^{**}\longrightarrow W^{**}, \\
&&y^{**}\longrightarrow f^{r****r}(x^{**},y^{**},z):Y^{**}\longrightarrow W^{**},\\
&&z^{**}\longrightarrow f^{r****r}(x^{**},y^{**},z^{**}):Z^{**}\longrightarrow W^{**},\\
&&x^{**}\longrightarrow f^{t****s}(x^{**},y^{**},z):X^{**}\longrightarrow W^{**}, \\
&&y^{**}\longrightarrow f^{t****s}(x,y^{**},z):Y^{**}\longrightarrow W^{**},\\
&&z^{**}\longrightarrow f^{t****s}(x^{**},y^{**},z^{**}):Z^{**}\longrightarrow W^{**},\\
&&x^{**}\longrightarrow f^{s****t}(x^{**},y,z):X^{**}\longrightarrow W^{**}, \\
&&y^{**}\longrightarrow f^{s****t}(x^{**},y^{**},z^{**}):Y^{**}\longrightarrow W^{**},\\
&&z^{**}\longrightarrow f^{s****t}(x^{**},y,z^{**}):Z^{**}\longrightarrow W^{**},
\end{eqnarray*}
are weak$^{*}-$weak$^{*}$ continuous for each $x\in X, y\in Y, z\in Z, x^{**}\in X^{**}, y^{**}\in Y^{**}$ and $z^{**}\in Z^{**}$.
For natural extensions of $f$ we have
\begin{enumerate}
\item $f^{i****i}(x^{**},y^{**},z^{**})=w^{*}-\lim\limits_\beta w^{*}-\lim\limits_\alpha w^{*}-\lim\limits_\gamma f(x_\alpha,y_\beta,z_{\gamma})$,

\item $f^{j****j}(x^{**},y^{**},z^{**})=w^{*}-\lim\limits_\alpha w^{*}-\lim\limits_\gamma w^{*}-\lim\limits_\beta f(x_\alpha,y_\beta,z_{\gamma})$,

\item $f^{r****r}(x^{**},y^{**},z^{**})=w^{*}-\lim\limits_\gamma w^{*}-\lim\limits_\beta w^{*}-\lim\limits_\alpha f(x_\alpha,y_\beta,z_{\gamma})$,

\item $f^{****}(x^{**},y^{**},z^{**})=w^{*}-\lim\limits_\alpha w^{*}-\lim\limits_\beta w^{*}-\lim\limits_\gamma f(x_\alpha,y_\beta,z_{\gamma})$,

\item $f^{t****s}(x^{**},y^{**},z^{**})=w^{*}-\lim\limits_\gamma w^{*}-\lim\limits_\alpha w^{*}-\lim\limits_\beta f(x_\alpha,y_\beta,z_{\gamma})$,

\item $f^{s****t}(x^{**},y^{**},z^{**})=w^{*}-\lim\limits_\beta w^{*}-\lim\limits_\gamma w^{*}-\lim\limits_\alpha f(x_\alpha,y_\beta,z_{\gamma}),$
\end{enumerate}
where $\{x_{\alpha} \}, \{y_{\beta} \}$ and $\{z_{\gamma} \}$ are nets in $X, Y$ and $Z$  which converge to $x^{**}\in X^{**},y^{**}\in Y^{**}$ and $z^{**}\in Z^{**}$  in the $w^{*}-$topologies, respectively. 
\begin{definition}\label{definition1}
A bounded tri-linear map $f$ is said to be completely regular when all natural extensions are equal, that is, $f^{i****i}= f^{j****j}=f^{r****r}=f^{****}=f^{t****s}=f^{s****t}$  holds. Also $f$ is said to be close-to-regular if $f^{t****s}=f^{s****t}$. If $f$ is completely regular, then trivially $f$ is close-to-regular. It is obvious that  $f$ is close-to-regular if and only if $f^{s*****s}=f^{t******j}.$
\end{definition}
Throughout the article, we usually identify a normed space with its canonical image in its second dual.

\section{Close-to-regular maps}
\label{se2}

We commence with the following theorem for  close-to-regular maps.
\begin{theorem}\label{theorem1}
For a bounded tri-linear map $f:X\times Y\times Z\longrightarrow W$  the following statements are equivalent:
\begin{enumerate}
\item $f$ is close-to-regular.
\item $f^{s***t*}(Y^{**},W^{*},Z)\subseteq X^{*}$ and $f^{s******}(X^{**},W^{*},Y^{**})\subseteq Z^{*}$.
\item $f^{t*****}(W^{*},Z^{**},X^{**})\subseteq Y^{*}$.
\end{enumerate}
\end{theorem}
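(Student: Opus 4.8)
The plan is to reduce every clause to a statement about the order in which three iterated weak-$*$ limits may be taken, using the limit formulas (1)--(6) recorded before Definition~\ref{definition1}. Fix $w^*\in W^*$ and, by Goldstine, bounded nets $(x_\alpha)\subseteq X$, $(y_\beta)\subseteq Y$, $(z_\gamma)\subseteq Z$ with $x_\alpha\to x^{**}$, $y_\beta\to y^{**}$, $z_\gamma\to z^{**}$ weak-$*$. Writing $L_\sigma$ for the iterated scalar limit of $\langle w^*,f(x_\alpha,y_\beta,z_\gamma)\rangle$ taken in the order $\sigma$ (outermost first), the six natural extensions paired against $w^*$ are exactly the six permutations of $\{\alpha,\beta,\gamma\}$; in particular $f^{t****s}$ gives $L_{\gamma\alpha\beta}$ and $f^{s****t}$ gives $L_{\beta\gamma\alpha}$, so close-to-regularity (item (1)) reads $L_{\gamma\alpha\beta}=L_{\beta\gamma\alpha}$ for all $x^{**},y^{**},z^{**},w^*$. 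For a fixed $z\in Z$ I also write $L^{z}_{\alpha\beta}=\lim_\alpha\lim_\beta\langle w^*,f(x_\alpha,y_\beta,z)\rangle$, and likewise $L^{z}_{\beta\alpha}$.

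The engine of the argument is an elementary \emph{predual criterion}, to be proved once and applied three times. Suppose a partial adjoint of $f$, evaluated at fixed arguments, is a functional on a bidual $V^{**}$ whose value is one of the limits $L_\sigma$, with the net attached to $V^{**}$ occupying some position in $\sigma$. Then this functional lies in $V^{*}$ if and only if that limit may be moved to the outermost position without changing the relative order of the remaining two. Indeed, if the functional equals $\xi\in V^{*}$, then weak-$*$ convergence of the distinguished net together with $\xi\in V^{*}$ lets one rewrite it as the outer limit, over that net, of its values at the elements of the net, which is precisely the rearranged iterated limit; conversely the rearranged limit is the canonical pairing of $V^{**}$ with the element of $V^{*}$ furnished by the inner double limit, which is bounded and linear in the single base-space variable because $f$ is bounded.

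First I would dispatch (1)$\Leftrightarrow$(3). Unwinding the flips and adjoints, $f^{t*****}$ is the fifth adjoint of $f^{t}$, and a direct computation gives $\langle f^{t*****}(w^*,z^{**},x^{**}),y^{**}\rangle=\langle f^{t****s}(x^{**},y^{**},z^{**}),w^*\rangle=L_{\gamma\alpha\beta}$, in which the net $(y_\beta)$ attached to $Y^{**}$ sits innermost. By the predual criterion, $f^{t*****}(w^*,z^{**},x^{**})\in Y^{*}$ for all arguments if and only if $L_{\gamma\alpha\beta}=L_{\beta\gamma\alpha}$, i.e.\ exactly (1). For (2) I compute the two mixed adjoints the same way: one finds $\langle f^{s***t*}(y^{**},w^*,z),x^{**}\rangle=L^{z}_{\beta\alpha}$ with $z\in Z$ fixed and $(x_\alpha)$ innermost, and $\langle f^{s******}(x^{**},w^*,y^{**}),z^{**}\rangle=L_{\beta\gamma\alpha}$ with the net $(z_\gamma)$ attached to $Z^{**}$ in the middle position. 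Hence, by the criterion, the first inclusion of (2) is equivalent to $L^{z}_{\alpha\beta}=L^{z}_{\beta\alpha}$ for every $z\in Z$, and the second inclusion is equivalent to $L_{\beta\gamma\alpha}=L_{\gamma\beta\alpha}$.

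To close (1)$\Leftrightarrow$(2): applying the first inclusion with $z=z_\gamma\in Z$ and then taking $\lim_\gamma$ outermost yields $L_{\gamma\alpha\beta}=L_{\gamma\beta\alpha}$, and combining with the second inclusion $L_{\gamma\beta\alpha}=L_{\beta\gamma\alpha}$ gives $L_{\gamma\alpha\beta}=L_{\beta\gamma\alpha}$, which is (1). Conversely, restricting the identity $L_{\gamma\alpha\beta}=L_{\beta\gamma\alpha}$ to $z^{**}=z\in Z$ collapses the $\gamma$-limit and returns exactly $L^{z}_{\alpha\beta}=L^{z}_{\beta\alpha}$, the first inclusion; feeding this back as above produces $L_{\gamma\alpha\beta}=L_{\gamma\beta\alpha}$, whence $L_{\gamma\beta\alpha}=L_{\beta\gamma\alpha}$, the second inclusion. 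The main obstacle I anticipate is bookkeeping rather than conceptual: correctly tracking the domains and limit-orders of $f^{t*****}$, $f^{s***t*}$ and $f^{s******}$ through the alternating flips and adjoints so that the distinguished net really occupies the claimed position. The one genuinely delicate instance of the criterion is the middle-position case governing the second inclusion of (2), where one must check that weak-$*$ continuity in $z^{**}$ still permits evaluating the functional as $\lim_\gamma$ of its values at $z_\gamma\in Z$ while the inner $\alpha$-limit stays frozen, which is what produces the clean interchange $L_{\beta\gamma\alpha}=L_{\gamma\beta\alpha}$.
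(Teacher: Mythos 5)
Your proof is correct, but it is organized around a different skeleton than the paper's. The paper proves the cycle $(1)\Rightarrow(2)\Rightarrow(3)\Rightarrow(1)$ entirely inside the adjoint calculus: for $(1)\Rightarrow(2)$ it identifies $f^{s***t*}(y^{**},w^{*},z)$ with $f^{t**}(y^{**},w^{*},z)$ and $f^{s******}(x^{**},w^{*},y^{**})$ with $f^{t***}(x^{**},y^{**},w^{*})$, functionals that automatically lie in $X^{*}$ and $Z^{*}$; for $(2)\Rightarrow(3)$ and $(3)\Rightarrow(1)$ it runs long iterated-limit chains in which each hypothesis licenses exactly one limit interchange, ending with $f^{t*****}(w^{*},z^{**},x^{**})=f^{s***t}(w^{*},z^{**},x^{**})\in Y^{*}$ and $f^{s****t}=f^{t****s}$. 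You instead isolate, as a reusable lemma, the principle that both of those computations secretly run on (a functional on a bidual whose value is an iterated limit lies in the predual precisely when that limit can be moved to the outermost slot), and then prove the pairwise equivalences $(1)\Leftrightarrow(3)$ and $(1)\Leftrightarrow(2)$ in the combinatorial language of limit orders $L_{\sigma}$; in particular you obtain $(2)\Rightarrow(1)$ directly, by feeding the fixed-$z$ interchange back along a net $z_{\gamma}\to z^{**}$ and composing with the middle-slot interchange $L_{\beta\gamma\alpha}=L_{\gamma\beta\alpha}$, an implication the paper never proves in isolation (it routes through $(3)$). Your decomposition buys modularity and makes visible exactly what each clause encodes: $(3)$ is one permitted interchange, $(2)$ is two, one of them holding only over $z\in Z$ rather than $z^{**}\in Z^{**}$. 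The paper's route buys explicit formulas, since it names the limit functionals as lower adjoints of $f$ ($f^{t**}$, $f^{t***}$, $f^{s***t}$) rather than merely locating them in the predual. Both instances of your criterion --- the innermost-slot case used for $(3)$ and the first half of $(2)$, and the middle-slot case used for the second half of $(2)$ --- are verified correctly in your sketch, and the middle-slot case you flag as delicate is precisely where the paper, too, needs the weak-$*$ continuity granted by the second inclusion.
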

\begin{proof}
Suppos $\{x_{\alpha} \}, \{y_{\beta} \}$ and $\{z_{\gamma} \}$ are nets in $X, Y$ and $Z$  which converge to $x^{**}\in X^{**},y^{**}\in Y^{**}$ and $z^{**}\in Z^{**}$  in the $w^{*}-$topologies, respectively.

(1) $\Rightarrow$ (2), if $f$ is close-to-regular, then $f^{t****s}=f^{s****t}$. For every $x^{**}\in X^{**},y^{**}\in Y^{**},z\in Z$ and $w^{*}\in W^{*}$ we have 
\begin{eqnarray*}
&\langle & f^{s***t*}(y^{**},w^{*},z),x^{**}\rangle=\langle y^{**},f^{s***}(z,x^{**},w^{*})\rangle\\
&=&\langle f^{s****t}(x^{**},y^{**},z),w^{*}\rangle=\langle f^{t****s}(x^{**},y^{**},z),w^{*}\rangle\\
&=&\langle f^{t****}(z,x^{**},y^{**}),w^{*}\rangle=\langle f^{t**}(y^{**},w^{*},z),x^{**}\rangle.
\end{eqnarray*}
Therefore $f^{s***t*}(y^{**},w^{*},z)=f^{t**}(y^{**},w^{*},z)\in X^{*}$, follows that $f^{s***t*}(Y^{**},W^{*},Z)\subseteq X^{*}$. In the other hand,
 \begin{eqnarray*}
 &\langle& f^{s******}(x^{**},w^{*},y^{**}),z^{**}\rangle=\langle w^{*},f^{s****}(y^{**},z^{**},x^{**})\rangle\\
&=&\langle w^{*},f^{s****t}(x^{**},y^{**},z^{**})\rangle=\langle w^{*},f^{t****s}(x^{**},y^{**},z^{**})\rangle\\
&=&\langle w^{*},f^{t****}(z^{**},x^{**},y^{**})\rangle=\langle z^{**},f^{t***}(x^{**},y^{**},w^{*})\rangle.
\end{eqnarray*}
Since the $f^{t***}(x^{**},y^{**},w^{*})\in z^{*}$, thus $f^{s******}(X^{**},W^{*},Y^{**})\subseteq Z^{*}$, as claimed.

(2) $\Rightarrow$ (3), if (2) holds then
 \begin{eqnarray*}
&\langle &f^{t*****}(w^{*},z^{**},x^{**}),y^{**}\rangle=\lim\limits_\gamma\lim\limits_\alpha\lim\limits_\beta \langle f(x_\alpha,y_\beta,z_{\gamma}),w^{*} \rangle\\
&=&\lim\limits_\gamma\lim\limits_\alpha\lim\limits_\beta \langle w^{*},f^{s}(y_{\beta},z_{\gamma},x_{\alpha})\rangle=\lim\limits_\gamma\lim\limits_\alpha\lim\limits_\beta \langle f^{s***}(z_{\gamma},x_{\alpha},w^{*}),y_{\beta}\rangle\\
&=&\lim\limits_\gamma\lim\limits_\alpha\langle y^{**},f^{s***t}(w^{*},z_{\gamma},x_{\alpha})\rangle=\lim\limits_\gamma\lim\limits_\alpha\langle f^{s***t*}(y^{**},w^{*},z_{\gamma}),x_{\alpha}\rangle\\
&=&\lim\limits_\gamma\langle f^{s***t*}(y^{**},w^{*},z_{\gamma}),x^{**}\rangle=\lim\limits_\gamma\langle y^{**},f^{s***t}(w^{*},z_{\gamma},x^{**})\rangle\\
&=&\lim\limits_\gamma\langle y^{**},f^{s***}(z_{\gamma},x^{**},w^{*})\rangle=\lim\limits_\gamma\langle f^{s******}(x^{**},w^{*},y^{**}),z_{\gamma}\rangle\\
&=&\langle f^{s******}(x^{**},w^{*},y^{**}),z^{**}\rangle=\langle f^{s***}(z^{**},x^{**},w^{*}),y^{**} \rangle\\
&=&\langle f^{s***t}(w^{*},z^{**},x^{**}),y^{**} \rangle.
\end{eqnarray*}
Since  $f^{s***t}(w^{*},z^{**},x^{**})\in Y^{*}$, thus (3) holds.

(3) $\Rightarrow$ (1), let $f^{t*****}(W^{*},Z^{**},X^{**})\subseteq Y^{*}$. Then for every $w^{*}\in W^{*}$ we have, 
 \begin{eqnarray*}
&\langle & f^{s****t}(x^{**},y^{**},z^{**}),w^{*}\rangle=\lim\limits_\beta\lim\limits_\gamma\lim\limits_\alpha \langle f(x_\alpha,y_\beta,z_{\gamma}),w^{*}\rangle\\
&=&\lim\limits_\beta\lim\limits_\gamma\lim\limits_\alpha \langle w^{*},f^{t}(z_{\gamma},x_{\alpha},y_{\beta}\rangle=\lim\limits_\beta\lim\limits_\gamma\lim\limits_\alpha \langle f^{t*}(w^{*},z_{\gamma},x_{\alpha}),y_{\beta}\rangle\\
&=&\lim\limits_\beta\lim\limits_\gamma\lim\limits_\alpha \langle f^{t**}(y_{\beta},w^{*},z_{\gamma}),x_{\alpha}\rangle=\lim\limits_\beta\lim\limits_\gamma \langle x^{**},f^{t**}(y_{\beta},w^{*},z_{\gamma})\rangle\\
&=&\lim\limits_\beta\lim\limits_\gamma \langle f^{t***}( x^{**},y_{\beta},w^{*}),z_{\gamma}\rangle=\lim\limits_\beta \langle z^{**},f^{t***}( x^{**},y_{\beta},w^{*})\rangle\\
&=&\lim\limits_\beta \langle f^{t****}(z^{**}, x^{**},y_{\beta}),w^{*}\rangle=\lim\limits_\beta \langle f^{t*****}(w^{*},z^{**}, x^{**}),y_{\beta}\rangle\\
&=&\langle f^{t*****}(w^{*},z^{**}, x^{**}),y^{**}\rangle=\langle f^{t****s}(x^{**},y^{**},z^{**}),w^{*}\rangle.
\end{eqnarray*}
It follows that $f$ is close-to-regular and this completes the proof.
\end{proof}
As an immediate consequence of Theorem \ref{theorem1}, we deduce the next result.
\begin{corollary}\label{corollary1}
Let $f:X\times Y\times Z\longrightarrow W $ be a bounded tri-linear mapping.
\begin{enumerate}
\item If $Y$ is reflexive, then $f$ is close-to-regular.
\item If $X$ and $Z$ are reflexive, then $f$ is close-to-regular.
\end{enumerate}
\end{corollary}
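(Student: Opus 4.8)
The plan is to derive both statements directly from Theorem~\ref{theorem1} by exploiting the collapse of higher duals of a reflexive space. The single fact I will use throughout is that a normed space $E$ is reflexive if and only if $E^{*}$ is reflexive, equivalently $E^{***}=E^{*}$ under the canonical identification; consequently, any bounded tri-linear map whose codomain is the third dual $E^{***}$ of a reflexive $E$ automatically takes values in $E^{*}$, which is exactly the type of inclusion that conditions (2) and (3) of Theorem~\ref{theorem1} demand.

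For part (1) I would verify condition (3). Tracking the effect of the five adjoints on $f^{t}:Z\times X\times Y\longrightarrow W$ shows that $f^{t*****}$ has codomain $Y^{***}$. If $Y$ is reflexive then $Y^{***}=Y^{*}$, so the required inclusion $f^{t*****}(W^{*},Z^{**},X^{**})\subseteq Y^{*}$ holds trivially, and Theorem~\ref{theorem1} yields that $f$ is close-to-regular.

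For part (2) I would instead verify condition (2). A bookkeeping of the adjoints applied to $f^{s}:Y\times Z\times X\longrightarrow W$ shows that $f^{s***t*}$ has codomain $X^{***}$ while $f^{s******}$ has codomain $Z^{***}$. When $X$ is reflexive we have $X^{***}=X^{*}$, so $f^{s***t*}(Y^{**},W^{*},Z)\subseteq X^{*}$ is automatic; when $Z$ is reflexive we have $Z^{***}=Z^{*}$, so $f^{s******}(X^{**},W^{*},Y^{**})\subseteq Z^{*}$ is automatic. With both inclusions in hand, condition (2) is satisfied and Theorem~\ref{theorem1} again gives close-to-regularity.

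The argument is short once the types are pinned down; the only place demanding care — and the step I would carry out first and most carefully — is the correct determination of the codomain of each iterated adjoint $f^{t*****}$, $f^{s***t*}$, and $f^{s******}$, since miscounting a single star would route the argument into the wrong dual space and destroy the collapse $E^{***}=E^{*}$ on which everything rests. Beyond this bookkeeping I do not anticipate any genuine obstacle.
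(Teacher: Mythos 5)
Your proposal is correct and is essentially the paper's own argument: the paper states this corollary as an immediate consequence of Theorem~\ref{theorem1}, and your verification — that $f^{t*****}$ lands in $Y^{***}$, while $f^{s***t*}$ and $f^{s******}$ land in $X^{***}$ and $Z^{***}$ respectively, so reflexivity collapses these to $Y^{*}$, $X^{*}$, $Z^{*}$ and makes conditions (3) and (2) automatic — is exactly the intended reasoning, with the adjoint bookkeeping carried out correctly. (Minor aside: your claimed equivalence ``$E$ reflexive iff $E^{*}$ reflexive'' can fail for incomplete normed spaces, but you only use the direction $E$ reflexive $\Rightarrow E^{***}=E^{*}$, which is valid.)
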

\begin{example}\label{example1}
Let $G$ be a finite locally compact Hausdorff group. We know from \cite{young} that $L^{1}(G)$ is regular if and only if it is reflexive or  $G$ is finite. So the bounded tri-linear mapping $f:L^{1}(G)\times L^{1}(G)\times L^{1}(G)\longrightarrow L^{1}(G)$ defined by $f(k,g,h)=k* g* h$ is close-to-regular, where $(k*g)(x)=\int_{G}k(y)g(y^{-1}x)dy$ for every $k, g$ and $h\in L^{1}(G)$.  
\end{example}
\begin{theorem}\label{theorem2}
Let $f:X\times Y\times Z\longrightarrow W$ be a bounded tri-linear map. Then,
\begin{enumerate}
\item $f^{r}$ is close-to-regular  if and only if $f^{i****i}=f^{j****j}$.
\item $f^{i}$ is close-to-regular  if and only if $f^{j****j}=f^{r****r}$.
\item $f^{j}$ is close-to-regular  if and only if $f^{i****i}=f^{r****r}$.
\item $f^{t}$ is close-to-regular  if and only if $f^{s****t}=f^{****}$.
\item $f^{s}$ is close-to-regular  if and only if $f^{t****s}=f^{****}$.
\end{enumerate}
\end{theorem}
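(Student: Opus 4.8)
The plan is to reduce each of the five statements to the defining identity $g^{t****s}=g^{s****t}$ for the relevant flip map $g$, and then to rewrite the two iterated weak$^{*}$-limits computing $g^{t****s}$ and $g^{s****t}$ as iterated limits of $f$ itself. The essential observation is that the six natural extensions $f^{i****i}, f^{j****j}, f^{r****r}, f^{****}, f^{t****s}, f^{s****t}$ are exactly the six iterated limits $w^{*}-\lim\, w^{*}-\lim\, w^{*}-\lim f(x_\alpha,y_\beta,z_\gamma)$ obtained by nesting the limits over the nets $\alpha,\beta,\gamma$ (in $X$, $Y$, $Z$) in the six possible orders, as recorded in the display preceding Definition \ref{definition1}. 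A permutation of the three slots of $f$ leaves the value $f(x_\alpha,y_\beta,z_\gamma)$ unchanged; it merely relabels which net occupies the first, second and third argument. Hence the $t****s$- and $s****t$-extensions of any flip map are again among these six iterated limits, only in a permuted nesting order.

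First I would fix, for a generic bounded tri-linear map $g:A\times B\times C\longrightarrow W$, the two formulas $g^{t****s}(a^{**},b^{**},c^{**})=w^{*}-\lim_{\gamma'} w^{*}-\lim_{\alpha'} w^{*}-\lim_{\beta'} g(a_{\alpha'},b_{\beta'},c_{\gamma'})$ and $g^{s****t}(a^{**},b^{**},c^{**})=w^{*}-\lim_{\beta'} w^{*}-\lim_{\gamma'} w^{*}-\lim_{\alpha'} g(a_{\alpha'},b_{\beta'},c_{\gamma'})$, where $\alpha',\beta',\gamma'$ index nets in the first, second and third variable of $g$. These are just the formulas already displayed for $f$, read now for $g$. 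Then for each flip map I would put $g=f^{\sigma}$, use $f^{\sigma}(\cdots)=f(x_\alpha,y_\beta,z_\gamma)$ to return to nets in $X,Y,Z$, and simply read off which of the six iterated limits of $f$ appears.

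Carrying this out for $f^{r}$, whose slots are $(Z,Y,X)$, gives $(f^{r})^{t****s}=w^{*}-\lim_{\alpha} w^{*}-\lim_{\gamma} w^{*}-\lim_{\beta} f(x_\alpha,y_\beta,z_\gamma)=f^{j****j}$ and $(f^{r})^{s****t}=w^{*}-\lim_{\beta} w^{*}-\lim_{\alpha} w^{*}-\lim_{\gamma} f(x_\alpha,y_\beta,z_\gamma)=f^{i****i}$, so that $f^{r}$ is close-to-regular if and only if $f^{i****i}=f^{j****j}$, which is (1). The remaining four cases are identical in method: the slot data for $f^{i},f^{j},f^{t},f^{s}$ are $(Y,X,Z),(X,Z,Y),(Z,X,Y),(Y,Z,X)$, and feeding each into the two generic formulas yields exactly the pairs $\{f^{r****r},f^{j****j}\}$, $\{f^{i****i},f^{r****r}\}$, $\{f^{s****t},f^{****}\}$ and $\{f^{****},f^{t****s}\}$, which is precisely (2)--(5).

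The only genuine care needed—and the step where an order slip is easiest—is the bookkeeping of the permutations: one must track simultaneously the order in which the three weak$^{*}$-limits are nested and the relabeling of the arguments of $f^{\sigma}$ back to $(x_\alpha,y_\beta,z_\gamma)$, and compose the two correctly. Once the correspondence ``flip map $\leftrightarrow$ permutation of $\{\alpha,\beta,\gamma\}$'' is set up cleanly (for instance in a short table giving, for each $\sigma$, the nesting orders produced by $(f^{\sigma})^{t****s}$ and $(f^{\sigma})^{s****t}$), all five equivalences drop out at once, and no analytic input beyond the weak$^{*}$-weak$^{*}$ continuity that already validates the iterated-limit formulas is required.
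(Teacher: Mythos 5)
Your proposal is correct and follows essentially the same route as the paper: the paper likewise rewrites $f(x_\alpha,y_\beta,z_\gamma)$ as $f^{r}(z_\gamma,y_\beta,x_\alpha)$ inside the iterated weak$^{*}$-limits, identifies $f^{i****i}$ with $(f^{r})^{s****t}$ and $f^{j****j}$ with $(f^{r})^{t****s}$, and concludes (1), leaving (2)--(5) to the same permutation bookkeeping. Your slot-by-slot identifications (including the pairs for $f^{i},f^{j},f^{t},f^{s}$) all check out against the six nesting orders listed before Definition~\ref{definition1}, so nothing is missing.
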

\begin{proof}
We prove only (1), the other parts have the same argument. Let $x^{**}\in X^{**},y^{**}\in Y^{**}, z^{**}\in Z^{**}$ and $w^{*}\in W^{*}$ and let $\{x_{\alpha} \}, \{y_{\beta} \}$ and $\{z_{\gamma} \}$ be nets in $X, Y$ and $Z$  which converge to $x^{**}, y^{**}$ and $z^{**}$  in the $w^{*}-$topologies, respectively. Then we have
\begin{eqnarray*}
\langle f^{i****i}(x^{**},y^{**},z^{**}),w^{*}\rangle &=&\lim\limits_{\beta}\lim\limits_{\alpha}\lim\limits_{\gamma}\langle f(x_{\alpha},y_{\beta},z_{\gamma}),w^{*}\rangle\\
&=&\lim\limits_{\beta}\lim\limits_{\alpha}\lim\limits_{\gamma}\langle f^{r}(z_{\gamma},y_{\beta},x_{\alpha}),w^{*}\rangle\\
&=&\langle f^{rs****t}(z^{**},y^{**},x^{**}),w^{*}\rangle.
\end{eqnarray*}
Therefore $f^{i****i}=f^{rs****t}$. In the other hand 
\begin{eqnarray*}
\langle f^{j****j}(x^{**},y^{**},z^{**}),w^{*}\rangle &=&\lim\limits_{\alpha}\lim\limits_{\gamma}\lim\limits_{\beta}\langle f(x_{\alpha},y_{\beta},z_{\gamma}),w^{*}\rangle\\
&=&\lim\limits_{\alpha}\lim\limits_{\gamma}\lim\limits_{\beta}\langle f^{r}(z_{\gamma},y_{\beta},x_{\alpha}),w^{*}\rangle\\
&=&\langle f^{rt****s}(z^{**},y^{**},x^{**}),w^{*}\rangle.
\end{eqnarray*}
Thus $f^{j****j}=f^{rt****s}$ and this completes the proof.
\end{proof}
As immediate consequences of the Theorem \ref{theorem2} we have the next corollaries.
\begin{corollary}\label{corollary2}
If $f$ is completely regular, then $f^{i},f^{j},f^{r}, f^{t}$ and $f^{s}$ are close-to-regular.
\end{corollary}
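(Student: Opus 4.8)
The plan is to read the result directly off Theorem \ref{theorem2}, since complete regularity forces the equality of every pair of natural extensions that appears in the five criteria. First I would recall that, by Definition \ref{definition1}, the hypothesis that $f$ is completely regular means precisely that
\[
f^{i****i}=f^{j****j}=f^{r****r}=f^{****}=f^{t****s}=f^{s****t}.
\]
In particular, any two of these six natural extensions coincide, and this single observation is all that will be needed.

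Next I would treat the five flip maps one at a time, matching each to the relevant clause of Theorem \ref{theorem2}. For $f^{r}$, Theorem \ref{theorem2}(1) states that $f^{r}$ is close-to-regular if and only if $f^{i****i}=f^{j****j}$; this equality sits inside the displayed chain, so $f^{r}$ is close-to-regular. In exactly the same way, $f^{i}$ is close-to-regular because $f^{j****j}=f^{r****r}$ (Theorem \ref{theorem2}(2)); $f^{j}$ is close-to-regular because $f^{i****i}=f^{r****r}$ (Theorem \ref{theorem2}(3)); $f^{t}$ is close-to-regular because $f^{s****t}=f^{****}$ (Theorem \ref{theorem2}(4)); and $f^{s}$ is close-to-regular because $f^{t****s}=f^{****}$ (Theorem \ref{theorem2}(5)). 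Each required identity is one of the equalities guaranteed by complete regularity, so all five conclusions follow simultaneously.

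There is no genuine obstacle here: the corollary is an immediate bookkeeping consequence of Theorem \ref{theorem2}, and the only thing to verify is that the particular pair of extensions demanded by each clause actually occurs among the identities forced by Definition \ref{definition1} — which it does in every case. The sole point meriting a moment's care is to keep straight which pair of natural extensions governs the close-to-regularity of each flip, but this correspondence is precisely the content of Theorem \ref{theorem2} and requires no further computation.
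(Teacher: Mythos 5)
Your proposal is correct and follows exactly the paper's intended route: the paper presents this corollary as an immediate consequence of Theorem \ref{theorem2}, and your argument simply reads each of the five clauses of that theorem against the chain of equalities $f^{i****i}=f^{j****j}=f^{r****r}=f^{****}=f^{t****s}=f^{s****t}$ given by Definition \ref{definition1}. The clause-by-clause matching you perform is precisely the bookkeeping the paper leaves implicit, and it is carried out correctly.
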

\begin{corollary}\label{corollary3}
If $f^{s}$ and $f^{t}$ are close-to-regular, then $f$ is close-to-regular.
\end{corollary}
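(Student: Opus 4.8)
The plan is to derive this statement directly from Theorem \ref{theorem2}, parts (4) and (5), using the canonical extension $f^{****}$ as a common pivot. First I would invoke Theorem \ref{theorem2}(5), which asserts that $f^{s}$ is close-to-regular if and only if $f^{t****s}=f^{****}$. Since $f^{s}$ is close-to-regular by hypothesis, this immediately yields the equality $f^{t****s}=f^{****}$.

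Next I would apply Theorem \ref{theorem2}(4), which states that $f^{t}$ is close-to-regular if and only if $f^{s****t}=f^{****}$. As $f^{t}$ is close-to-regular by hypothesis, this gives $f^{s****t}=f^{****}$. Combining the two identities through the common extension $f^{****}$, I would conclude $f^{t****s}=f^{****}=f^{s****t}$, whence $f^{t****s}=f^{s****t}$. By Definition \ref{definition1} this is precisely the assertion that $f$ is close-to-regular, which finishes the argument.

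There is essentially no obstacle here: the entire proof is a transitivity-of-equality step, and the only point requiring care is to feed the two hypotheses into the correct parts of Theorem \ref{theorem2}, so that both $f^{t****s}$ and $f^{s****t}$ are identified with the \emph{same} extension $f^{****}$ rather than with two different ones. If one preferred to avoid citing Theorem \ref{theorem2}, the same conclusion can be reached by unwinding the iterated-limit formulas listed before Definition \ref{definition1}: close-to-regularity of $f^{s}$ forces the limit order $\gamma\alpha\beta$ (that is, $f^{t****s}$) to agree with $\alpha\beta\gamma$ (that is, $f^{****}$), while close-to-regularity of $f^{t}$ forces $\beta\gamma\alpha$ (that is, $f^{s****t}$) to agree with $\alpha\beta\gamma$; both extensions then coincide with $f^{****}$, giving the claim. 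The appeal to Theorem \ref{theorem2} is, however, the cleaner route and the one I would present.
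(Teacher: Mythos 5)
Your proposal is correct and matches the paper's intended argument exactly: the paper states Corollary~\ref{corollary3} as an immediate consequence of Theorem~\ref{theorem2}, and your use of parts (4) and (5) to identify both $f^{s****t}$ and $f^{t****s}$ with $f^{****}$, followed by transitivity and Definition~\ref{definition1}, is precisely that deduction.
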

\begin{theorem}\label{theorem3}
Let $f:X\times Y\times Z\longrightarrow W$ and $g:X\times S\times Z\longrightarrow W$ be
bounded tri-linear mappings and let $h:Y\longrightarrow S$ be a bounded linear mapping such that
$f(x,y,z)=g(x,h(y),z)$, for every $x\in X, y\in Y$ and $z\in Z$. If $h$ is weakly compact, then $f$ is close-to-regular.
\end{theorem}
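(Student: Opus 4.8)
The plan is to exploit the very special structure of the two extensions that close-to-regularity compares. Writing out the iterated-limit formulas given in the introduction, $f^{t****s}$ takes the limits in the order $\gamma,\alpha,\beta$ (innermost $\beta$), while $f^{s****t}$ takes them in the order $\beta,\gamma,\alpha$ (outermost $\beta$). The decisive observation is that these two differ \emph{only} in the position of the $\beta$-limit: in both the relative order of the $\alpha$- and $\gamma$-limits is the same $\lim_{\gamma}\lim_{\alpha}$. Since $h$ acts precisely on the middle ($y$) slot through $f(x,y,z)=g(x,h(y),z)$, and that slot is the one carried by the net $\{y_{\beta}\}$, weak compactness of $h$ is exactly the hypothesis needed to move the $\beta$-limit past the others. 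So I would reduce the whole identity to a single scalar computation in the $y$-variable.

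Concretely, fix $w^{*}\in W^{*}$ and, by Goldstine's theorem, choose \emph{bounded} nets $\{x_{\alpha}\},\{y_{\beta}\},\{z_{\gamma}\}$ converging weak$^{*}$ to $x^{**},y^{**},z^{**}$. I would introduce the functional $\Phi\in S^{*}$ defined by $\Phi(s)=\lim_{\gamma}\lim_{\alpha}\langle g(x_{\alpha},s,z_{\gamma}),w^{*}\rangle$. Boundedness of the nets and of $g$ guarantees that this iterated weak$^{*}$ limit exists (it is a standard Arens-type extension of the bounded bilinear map $(x,z)\mapsto g(x,s,z)$, obtained by pairing against $\xi_{z}(x)=\langle g(x,s,z),w^{*}\rangle\in X^{*}$ and then against a functional in $Z^{*}$) and that $s\mapsto\Phi(s)$ is linear and bounded. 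The point is that this single $\Phi$ governs both extensions, because both carry the same inner pair $\lim_{\gamma}\lim_{\alpha}$.

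Now I would invoke Gantmacher's theorem: $h$ is weakly compact if and only if $h^{**}(Y^{**})\subseteq S$, so $s_{0}:=h^{**}(y^{**})$ is a genuine element of $S$. For $f^{t****s}$ the innermost $\beta$-limit, using $\psi_{\alpha\gamma}(s):=\langle g(x_{\alpha},s,z_{\gamma}),w^{*}\rangle\in S^{*}$, gives $\lim_{\beta}\langle h(y_{\beta}),\psi_{\alpha\gamma}\rangle=\langle h^{**}(y^{**}),\psi_{\alpha\gamma}\rangle=\langle s_{0},\psi_{\alpha\gamma}\rangle=\langle g(x_{\alpha},s_{0},z_{\gamma}),w^{*}\rangle$, after which the two remaining limits collapse to $\Phi(s_{0})$. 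For $f^{s****t}$ the inner pair $\lim_{\gamma}\lim_{\alpha}$ (with $\beta$ fixed, so $h(y_{\beta})\in S$) yields $\Phi(h(y_{\beta}))$, and the outer $\beta$-limit gives $\lim_{\beta}\Phi(h(y_{\beta}))=\lim_{\beta}\langle y_{\beta},h^{*}\Phi\rangle=\langle y^{**},h^{*}\Phi\rangle=\langle h^{**}(y^{**}),\Phi\rangle=\Phi(s_{0})$. Thus both pairings equal $\Phi(s_{0})$; as $w^{*}\in W^{*}$ is arbitrary, $f^{t****s}=f^{s****t}$ and $f$ is close-to-regular.

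The hard part is conceptual rather than computational: recognizing that close-to-regularity compares exactly the two orderings that isolate the $\beta$-slot, and that weak compactness is used at precisely one point, namely to guarantee $h^{**}(y^{**})\in S$. This one fact does double duty, since it both lets me evaluate $\Phi$ at $h^{**}(y^{**})$ in the first computation and collapses the outer $\beta$-limit to the same value in the second. The remaining care goes into justifying that $\Phi$ and the $\psi_{\alpha\gamma}$ are well-defined bounded elements of $S^{*}$ and that the nets may be chosen bounded, so that all the weak$^{*}$ limits and the weak-compactness argument are legitimate.
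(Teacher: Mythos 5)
Your proof is correct, but it takes a genuinely different route from the paper's. The paper argues through its criterion: it records (as a ``direct verification'') the identity $f^{t****s}(x^{**},y^{**},z^{**})=g^{t****s}(x^{**},h^{**}(y^{**}),z^{**})$, unwinds adjoints to get the operator identity $f^{t*****}(w^{*},z^{**},x^{**})=h^{***}\bigl(g^{t*****}(w^{*},z^{**},x^{**})\bigr)$, and then applies weak compactness on the dual side --- $h$ weakly compact forces $h^{*}$ weakly compact, hence $h^{***}(S^{***})\subseteq Y^{*}$ --- to obtain $f^{t*****}(W^{*},Z^{**},X^{**})\subseteq Y^{*}$, which is exactly condition (3) of Theorem \ref{theorem1}; that theorem then finishes the proof. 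You bypass Theorem \ref{theorem1} entirely and verify $f^{t****s}=f^{s****t}$ straight from the iterated-limit formulas, using weak compactness on the primal side (Gantmacher: $h^{**}(Y^{**})\subseteq S$) so that both orderings of limits collapse to the common scalar $\Phi(h^{**}(y^{**}))$, with $\Phi\in S^{*}$ encoding the inner pair $\lim_{\gamma}\lim_{\alpha}$ that the two extensions share. What each buys: your argument is self-contained and isolates the structural reason the theorem holds (the two extensions differ only in where the $\beta$-limit sits, and weak compactness is used exactly once, to place $h^{**}(y^{**})$ in $S$); the paper's is shorter given Theorem \ref{theorem1} and produces the reusable identity $f^{t*****}=h^{***}\circ g^{t*****}$, the same pattern it repeats for complete regularity in Section \ref{se4}. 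Since $h^{**}(Y^{**})\subseteq S$ and $h^{***}(S^{***})\subseteq Y^{*}$ are equivalent forms of Gantmacher's theorem, your computation is in effect the predual version of the paper's argument, essentially inlining its proof of (3)$\Rightarrow$(1) in Theorem \ref{theorem1}. One quibble: the existence of your iterated limits follows from weak$^{*}$ convergence alone (as your own parenthetical explanation shows); boundedness of the nets is needed only for the norm estimate on $\Phi$, so the appeal to Goldstine is a convenience rather than a necessity.
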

\begin{proof}
Suppos $\{x_{\alpha} \}, \{y_{\beta} \}$ and $\{z_{\gamma} \}$ are nets in $X, Y$ and $Z$  which converge to $x^{**}\in X^{**},y^{**}\in Y^{**}$ and $z^{**}\in Z^{**}$  in the $w^{*}-$topologies, respectively. Then a direct verification reveals that
$f^{t****s}(x^{**},y^{**},z^{**})=g^{t****s}(x^{**},h^{**}(y^{**}),z^{**}).$
Then for each $y^{**}\in Y^{**}$ we have
\begin{eqnarray*}
\langle f^{t*****}(w^{*},z^{**},x^{**}),y^{**}\rangle &=&\langle w^{*},f^{t****}(z^{**},x^{**},y^{**})\rangle\\
&=& \langle w^{*},f^{t****s}(x^{**},y^{**},z^{**})\rangle\\
&=& \langle w^{*},g^{t****s}(x^{**},h^{**}(y^{**}),z^{**})\rangle\\
&=& \langle w^{*},g^{t****}(z^{**},x^{**},h^{**}(y^{**}))\rangle\\
&=& \langle g^{t*****}(w^{*},z^{**},x^{**}),h^{**}(y^{**})\rangle\\
&=& \langle h^{***}(g^{t*****}(w^{*},z^{**},x^{**})),y^{**}\rangle.
\end{eqnarray*}
Therefore $f^{t*****}(w^{*},z^{**},x^{**})=h^{***}(g^{t*****}(w^{*},z^{**},x^{**}))$. The weak compactness of $h$ implies that of $h^{*}$, from which we have $h^{***}(S^{***})\subseteq Y^{*}$. In particular,
$h^{***}(g^{t*****}(W^{*},Z^{**},X^{**}))\subseteq Y^{*}$, thus we deduce $f^{t*****}(W^{*},Z^{**},X^{**})\subseteq Y^{*}$. It follows that $f$ is close-to-regular and this completes the proof.
\end{proof}
If $Y$ or $S$ is reflexive, then every bounded linear mapping $h:Y\longrightarrow S$ is weakly compact. Thus we give the next result.
\begin{corollary}\label{corollary5}
Let $f:X\times Y\times Z\longrightarrow W$ and $g:X\times S\times Z\longrightarrow W$ be
bounded tri-linear mappings and let $h:Y\longrightarrow S$ be a bounded linear mapping such that
$f(x,y,z)=g(x,h(y),z)$, for every $x\in X, y\in Y$ and $z\in Z$. If $S$ is reflexive, then $f$ is close-to-regular.
\end{corollary}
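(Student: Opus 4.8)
The plan is to reduce the statement entirely to Theorem~\ref{theorem3}, whose hypothesis is the weak compactness of the intertwining map $h$. Since the algebraic relation $f(x,y,z)=g(x,h(y),z)$ is exactly the one assumed in Theorem~\ref{theorem3}, the only thing that must be checked is that reflexivity of $S$ forces $h:Y\longrightarrow S$ to be weakly compact. Once that is established, Theorem~\ref{theorem3} applies verbatim and delivers close-to-regularity of $f$.

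First I would recall the standard characterization of weak compactness: a bounded linear operator is weakly compact precisely when it maps the closed unit ball of its domain into a relatively weakly compact subset of its codomain. Here $h$ is bounded, so $h(B_Y)$ is a bounded subset of $S$. By Kakutani's theorem the closed unit ball $B_S$ of the reflexive space $S$ is weakly compact, hence every bounded subset of $S$ is relatively weakly compact. In particular $\overline{h(B_Y)}^{\,w}$ is weakly compact, so $h$ is weakly compact. This is exactly the remark recorded immediately before the corollary (the case ``$S$ reflexive'').

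With the weak compactness of $h$ in hand, I would simply invoke Theorem~\ref{theorem3} to conclude that $f$ is close-to-regular. There is essentially no obstacle in this argument: the entire content lies in the elementary functional-analytic fact that operators into a reflexive range space are weakly compact, and the rest is a direct citation of the previously proved theorem. If one wished to make the corollary self-contained, the only mildly technical point worth spelling out would be the passage from ``$h(B_Y)$ bounded'' to ``$h(B_Y)$ relatively weakly compact,'' which uses reflexivity of $S$ through the weak compactness of $B_S$; this is the sole place where the hypothesis on $S$ enters.
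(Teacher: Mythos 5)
Your proposal is correct and follows exactly the paper's route: the paper proves this corollary by the remark preceding it, namely that reflexivity of $S$ (or $Y$) makes every bounded linear map $h:Y\longrightarrow S$ weakly compact, and then cites Theorem~\ref{theorem3}. Your spelling out of the weak-compactness step via the weak compactness of the unit ball of the reflexive space $S$ is just a more detailed version of that same remark.
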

\begin{theorem}\label{theorem4}
Let $f:X\times Y\times Z\longrightarrow W$ be bounded tri-linear mapping. If  $f^{****t**s}=f^{t**s****}$ and $f^{****s**t}=f^{s**t****}$. Then $f$ is close-to-regular
\end{theorem}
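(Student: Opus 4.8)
The plan is to derive close-to-regularity from Theorem~\ref{theorem1}, aiming at the two range inclusions of its condition~(2), namely $f^{s***t*}(Y^{**},W^{*},Z)\subseteq X^{*}$ and $f^{s******}(X^{**},W^{*},Y^{**})\subseteq Z^{*}$, and to extract one inclusion from each hypothesis. A codomain count already fixes the pairing: tracing the adjoint–flip tower shows that both $f^{****t**s}$ and $f^{t**s****}$ are bounded tri-linear maps $W^{***}\times Z^{**}\times Y^{****}\to X^{***}$, whereas $f^{****s**t}$ and $f^{s**t****}$ are maps $Y^{**}\times X^{****}\times W^{***}\to Z^{***}$. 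Thus the first hypothesis is the natural candidate to force an inclusion landing in $X^{*}$ and the second to force one landing in $Z^{*}$.

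The structural fact I would record first is that $f^{t**s****}=(f^{t**s})^{****}$ and $f^{s**t****}=(f^{s**t})^{****}$ are the \emph{full} natural extensions of the tri-linear maps $f^{t**s}:W^{*}\times Z\times Y^{**}\to X^{*}$ and $f^{s**t}:Y\times X^{**}\times W^{*}\to Z^{*}$. By construction each is the unique natural extension that is weak$^{*}$–weak$^{*}$ continuous in its first variable for all arguments, and in any later variable only once the earlier arguments are taken in the canonical images of the original spaces, exactly as in the three continuity statements characterizing $f^{****}$. By contrast, $f^{****t**s}$ and $f^{****s**t}$ are produced by flipping and re-dualizing the already fully extended $f^{****}$, so they carry a \emph{different} weak$^{*}$-continuity pattern, inherited from $f^{****}$ through the flips. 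Equating the two (which is precisely what each hypothesis asserts) therefore transports continuity into a slot where the full extension is normally discontinuous; restricting the resulting identity to the canonical images $X\hookrightarrow X^{****}$, $W^{*}\hookrightarrow W^{***}$, $Y^{**}$, and pushing the relevant weak$^{*}$-limits through, collapses the high-dual target down to $X^{*}$, respectively $Z^{*}$. Concretely, I expect the first hypothesis to yield $f^{s***t*}(Y^{**},W^{*},Z)\subseteq X^{*}$ and the second to yield $f^{s******}(X^{**},W^{*},Y^{**})\subseteq Z^{*}$, which are the two inclusions of condition~(2).

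The computations I would carry out exactly in the style of the proof of Theorem~\ref{theorem1}: fix nets $\{x_{\alpha}\},\{y_{\beta}\},\{z_{\gamma}\}$ in $X,Y,Z$ converging weak$^{*}$ to $x^{**},y^{**},z^{**}$, unwind each side of the hypothesis through the defining relations $\langle g^{*}(\cdot),\cdot\rangle=\langle\cdot,g(\cdot)\rangle$ into an iterated weak$^{*}$-limit of $\langle f(x_{\alpha},y_{\beta},z_{\gamma}),w^{*}\rangle$, and then read off that the element produced in $X^{***}$ (respectively $Z^{***}$) is in fact attained by an element of $X^{*}$ (respectively $Z^{*}$). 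With both inclusions established, the implication (2)$\Rightarrow$(1) of Theorem~\ref{theorem1} gives that $f$ is close-to-regular.

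The main obstacle is bookkeeping rather than conceptual. The hypotheses live eight adjoints deep and interleave two flips, so I must track every domain and codomain through the whole tower without slipping, and—more delicately—justify descending an identity of maps defined on very high duals (such as $X^{****},W^{***},Y^{****}$) to the canonical images. This descent requires invoking the separate weak$^{*}$ continuity of the full extensions in precisely the variable being restricted, together with Goldstine-type weak$^{*}$-density to move the limits past the functionals. Aligning the continuity slot with the restricted variable is the crux; once that alignment is secured, the collapse of the targets to $X^{*}$ and $Z^{*}$ is automatic, and Theorem~\ref{theorem1} closes the argument.
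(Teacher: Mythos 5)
Your bookkeeping of domains and codomains is accurate, but the central claim of your plan---that each hypothesis separately yields one of the two inclusions in condition (2) of Theorem~\ref{theorem1}---does not hold, and the pairing you propose is the wrong way around. Both inclusions in that condition are statements about the single extension $f^{s****t}$: unwinding definitions as in the proof of Theorem~\ref{theorem1} gives $\langle f^{s***t*}(y^{**},w^{*},z),x^{**}\rangle=\langle f^{s****t}(x^{**},y^{**},z),w^{*}\rangle$ and $\langle f^{s******}(x^{**},w^{*},y^{**}),z^{**}\rangle=\langle w^{*},f^{s****t}(x^{**},y^{**},z^{**})\rangle$. On the other hand, your first hypothesis $f^{****t**s}=f^{t**s****}$, when evaluated at points of the canonical images $W^{*}\times Z^{**}\times Y^{**}$ and unwound through the adjoint--flip identities (this computation is written out in full in the proof of Theorem~\ref{theorem6}), says exactly that $f^{t****s}=f^{****}$; it carries no information about $f^{s****t}$ and therefore cannot produce $f^{s***t*}(Y^{**},W^{*},Z)\subseteq X^{*}$. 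It is the \emph{second} hypothesis, which similarly restricts to $f^{s****t}=f^{****}$, that yields this inclusion, since $\langle f^{****}(x^{**},y^{**},z),w^{*}\rangle=\langle x^{**},f^{***}(y^{**},z,w^{*})\rangle$ and $f^{***}$ takes values in $X^{*}$. Worse, the other inclusion $f^{s******}(X^{**},W^{*},Y^{**})\subseteq Z^{*}$ amounts to weak$^{*}$ continuity of $z^{**}\mapsto f^{s****t}(x^{**},y^{**},z^{**})$, and neither hypothesis alone provides it: substituting $f^{s****t}=f^{****}$ leaves the map $z^{**}\mapsto f^{****}(x^{**},y^{**},z^{**})$, which is not weak$^{*}$ continuous in general. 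To rescue it you must invoke the first hypothesis as well, replacing $f^{****}$ by $f^{t****s}$---but at that moment you have already established $f^{s****t}=f^{****}=f^{t****s}$, which \emph{is} close-to-regularity, so the detour through Theorem~\ref{theorem1} has become circular.

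The direct route is precisely the paper's proof: each hypothesis, restricted to the canonical images and unwound by the net computation of Theorem~\ref{theorem6}, forces one of the two outer extensions to coincide with $f^{****}$, namely $f^{****}=f^{s****t}$ from $f^{****s**t}=f^{s**t****}$, and $f^{****}=f^{t****s}$ from $f^{****t**s}=f^{t**s****}$; transitivity then gives $f^{t****s}=f^{s****t}$. If you redirect your argument at these two identities instead of the inclusions of Theorem~\ref{theorem1}(2), the net computations you sketch in your third paragraph are exactly what is needed, and no Goldstine-type descent on $X^{****}$, $W^{***}$, $Y^{****}$ is required beyond evaluating both hypotheses at $w^{*}\in W^{*}$, $z^{**}\in Z^{**}$, $y^{**}\in Y^{**}$ (respectively $y^{**}$, $x^{**}$, $w^{*}$).
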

\begin{proof}
Let  $\{y_{\beta} \}\subseteq Y$ be net which converge to $y^{**}\in Y^{**}$ in the $w^{*}-$topologies. Using the equality $f^{****s**t}=f^{s**t****}$, a standard argument applies to show that $f^{****}=f^{s****t}$. In the other hand, let $\{z_{\gamma} \}\subseteq Z$ be net which converge to $z^{**}\in Z^{**}$  in the $w^{*}-$topologies. Then equality $f^{****t**s}=f^{t**s****}$ impleas that $f^{****}=f^{t****s}$. Therefore $f^{s****t}=f^{t****s}$, as claimed.
\end{proof}

\section{Relationship between close-to-regularity bounded tri-linear maps and Arens regularity bounded bilinear maps}
\label{se3}
Let $m:X\times Y\longrightarrow Z$ be a continuous bilinear map. The extension of $m$ on normed spaces and the concept of regularity of $m$ were introduced by Richard Arens in \cite{Arens1} and \cite{Arens2}. Some characterizations for the Arens regularity of bounded bilinear map $m$ and Banach algebra $A$ are proved in \cite{Arikan1}, \cite{Arikan2}, \cite{Civin and Yood}, \cite{Haghnejad Azar},\cite{ulger}, \cite{sheikh} and \cite{young}.

S. Mohammadzade and H. R. E. Vishki in \cite[Theorem 2.1]{Mohamadzadeh} (see also, \cite[Proposition 4.1]{Dales} ) have shown that, $m^{****}(Z^{*},X^{**})\subseteq Y^{*}$ if and only if  $m$ is Arens regular. Now applying Theorem \ref{theorem1} and we give next Theorem. 
\begin{theorem}\label{theorem5}
Let $f:X\times Y\times Z\longrightarrow W$ be a bounded tri-linear mapping and let $m:Y\times Z\longrightarrow X^{*}$ be a bounded bilinear mapping such that $m(y,z)=f^{s*}(w^{*},y,z)$, for every $y\in Y, z\in Z$ and $w^{*}\in W^{*}$ . If $f$ is close-to-regular, then $m$ is Arens regular.
\end{theorem}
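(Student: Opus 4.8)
The plan is to reduce the Arens regularity of $m$ to a single containment and then to recognise that containment as exactly the second half of condition~(2) in Theorem~\ref{theorem1}. By the criterion of Mohammadzadeh and Vishki quoted above, applied to the bilinear map $m:Y\times Z\longrightarrow X^{*}$ (so that the roles of $X,Y,Z$ there are played by $Y,Z,X^{*}$), the map $m$ is Arens regular if and only if $m^{****}(X^{**},Y^{**})\subseteq Z^{*}$, where the first variable of $m^{****}$ is read through the canonical embedding $X^{**}\hookrightarrow X^{****}$. Here $w^{*}\in W^{*}$ is regarded as fixed, so $m=m_{w^{*}}$ and the argument is carried out for an arbitrary such $w^{*}$. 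It therefore suffices to identify $m^{****}$ with a natural extension of $f^{s}$.

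First I would unwind the Arens adjoints of $m$ one at a time, at each step feeding the fixed functional $w^{*}$ into the $W^{*}$-slot of the corresponding adjoint of $f^{s}$. Starting from $\langle m(y,z),x\rangle=\langle w^{*},f(x,y,z)\rangle$, a routine pairing computation gives $m^{*}(x^{**},y)=f^{s**}(x^{**},w^{*},y)$, then $m^{**}(z^{**},x^{**})=f^{s***}(z^{**},x^{**},w^{*})$, and after two further adjoints
\[
\langle m^{****}(x^{**},y^{**}),z^{**}\rangle=\langle w^{*},f^{s****}(y^{**},z^{**},x^{**})\rangle=\langle f^{s******}(x^{**},w^{*},y^{**}),z^{**}\rangle ,
\]
so that $m^{****}(x^{**},y^{**})=f^{s******}(x^{**},w^{*},y^{**})$ for all $x^{**}\in X^{**}$ and $y^{**}\in Y^{**}$. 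This is the key identity of the proof.

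With it in hand the conclusion is immediate. Since $f$ is close-to-regular, Theorem~\ref{theorem1} gives $f^{s******}(X^{**},W^{*},Y^{**})\subseteq Z^{*}$; hence $m^{****}(X^{**},Y^{**})\subseteq Z^{*}$, and the Mohammadzadeh--Vishki criterion yields that $m$ is Arens regular.

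The only delicate point is the bookkeeping in the middle paragraph: one must track which iterated dual each variable inhabits and, in particular, use the embedding $X^{**}\hookrightarrow X^{****}$ to rewrite $\langle x^{**},m^{***}(y^{**},z^{**})\rangle$ as $\langle m^{***}(y^{**},z^{**}),x^{**}\rangle$ when passing to the last adjoint. Matching the slot orders of $m^{****}$ and $f^{s******}$ correctly is where an index slip would most easily occur; apart from that, the computation is the same sort of pairing manipulation already used in the proof of Theorem~\ref{theorem1}, so no additional $w^{*}$-limit analysis is needed.
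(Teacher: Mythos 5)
Your proposal is correct and takes essentially the same route as the paper: both proofs establish the key identity $m^{****}(x^{**},y^{**})=f^{s******}(x^{**},w^{*},y^{**})$ and then combine condition (2) of Theorem \ref{theorem1} (giving $f^{s******}(X^{**},W^{*},Y^{**})\subseteq Z^{*}$) with the Mohammadzadeh--Vishki criterion. The only difference is cosmetic --- the paper verifies that identity via iterated weak$^{*}$ limits along nets, while you unwind the adjoints of $m$ step by step; both are the same pairing computation.
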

\begin{proof}
Let $\{y_{\beta} \}$ and $\{z_{\gamma} \}$ be two net in $Y$ and $Z$  which converge to $y^{**}$ and $z^{**}$  in the $w^{*}-$topologies, respectively. Using the equality $m(y,z)=f^{s*}(w^{*},y,z)$, we have
\begin{eqnarray*}
\langle f^{s******}(x^{**},w^{*},y^{**}),z^{**}\rangle &=&\lim\limits_{\beta}\lim\limits_{\gamma}\langle x^{**},f^{s*}(w^{*},y_{\beta},z_{\gamma})\rangle\\
&=& \lim\limits_{\beta}\lim\limits_{\gamma}\langle x^{**},m(y_{\beta},z_{\gamma})\rangle\\
&=&\langle x^{**},m^{***}(y^{**},z^{**})\rangle\\
&=&\langle m^{****}(x^{**},y^{**}),z^{**}\rangle.
\end{eqnarray*}
It show that $f^{s******}(X^{**},W^{*},Y^{**})=m^{****}(X^{**},Y^{**})$. The close-to-regular of $f$ implies that $f^{s******}(X^{**},W^{*},Y^{**})\subseteq Z^{*}$. Therefore $m^{****}(X^{**},Y^{**})\subseteq Z^{*}$, as claimed.
\end{proof}
In the Theorem \ref{theorem5}, if $m$ is Arens regular, then a direct verification reveals that $f^{s****t}=f^{r****r}$.
\begin{theorem}\label{theorem6}
Let $f:X\times Y\times Z\longrightarrow W$ be a bounded tri-linear mapping and let $m:X\times Y\longrightarrow Z$ and $\theta:X\times Y\longrightarrow W$ be  bounded bilinear mappings such that $\theta(x,y)=f(x,y,m(x,y))$, for every $x\in X$ and $y\in Y$ . If $f^{t***r}=f^{r***t}$ and $f^{****t**s}=f^{t**s****}$, then $\theta$ is Arens regular.
\end{theorem}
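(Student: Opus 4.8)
The plan is to invoke the criterion recalled just before Theorem \ref{theorem5}: $\theta$ is Arens regular precisely when its two Arens extensions coincide, i.e. $\theta^{***}=\theta^{r***r}$, which amounts to
\[\lim_\alpha\lim_\beta\langle w^*,\theta(x_\alpha,y_\beta)\rangle=\lim_\beta\lim_\alpha\langle w^*,\theta(x_\alpha,y_\beta)\rangle\]
for all $x^{**}\in X^{**}$, $y^{**}\in Y^{**}$, $w^*\in W^*$, where $\{x_\alpha\}\subseteq X$ and $\{y_\beta\}\subseteq Y$ are nets converging weak$^*$ to $x^{**}$ and $y^{**}$. Since $x_\alpha\in X$ and $y_\beta\in Y$, the defining relation gives $\theta(x_\alpha,y_\beta)=f(x_\alpha,y_\beta,m(x_\alpha,y_\beta))$, so the whole task reduces to interchanging the $\alpha$- and $\beta$-limits in $\lim_\alpha\lim_\beta\langle w^*,f(x_\alpha,y_\beta,m(x_\alpha,y_\beta))\rangle$.

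Next I would translate the two hypotheses into limit-interchange statements, using the same adjoint dictionary as in the introduction and in the proof of Theorem \ref{theorem4}. Unwinding the adjoints shows that, for each fixed $z\in Z$,
\[\langle f^{t***r}(w^*,y^{**},x^{**}),z\rangle=\lim_\alpha\lim_\beta\langle w^*,f(x_\alpha,y_\beta,z)\rangle,\qquad \langle f^{r***t}(w^*,y^{**},x^{**}),z\rangle=\lim_\beta\lim_\alpha\langle w^*,f(x_\alpha,y_\beta,z)\rangle,\]
so the first hypothesis $f^{t***r}=f^{r***t}$ says exactly that every section $(x,y)\mapsto f(x,y,z)$ is Arens regular, i.e. the $\alpha$- and $\beta$-limits may be interchanged whenever the third slot is frozen at a point of $Z$. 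The second hypothesis $f^{****t**s}=f^{t**s****}$ is handled exactly as in Theorem \ref{theorem4}: taking a net $\{z_\gamma\}\subseteq Z$ with $z_\gamma\to z^{**}$ it yields $f^{****}=f^{t****s}$, that is $\lim_\alpha\lim_\beta\lim_\gamma f(x_\alpha,y_\beta,z_\gamma)=\lim_\gamma\lim_\alpha\lim_\beta f(x_\alpha,y_\beta,z_\gamma)$, so the $Z$-slot limit may always be pulled out to the outermost position.

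The heart of the argument is to combine these two facts. The obstruction is that in $f(x_\alpha,y_\beta,m(x_\alpha,y_\beta))$ the third argument is coupled to the very same indices $\alpha,\beta$ as the first two. To break this coupling I would introduce an auxiliary net for the $Z$-slot: for fixed $\alpha$ the slot map $m(x_\alpha,\cdot)\colon Y\to Z$ is bounded linear, hence its bidual is weak$^*$--weak$^*$ continuous and $m(x_\alpha,y_\beta)$ converges weak$^*$ in $Z^{**}$. Passing to a triple net and applying $f^{****}=f^{t****s}$ lets me relocate the limit carried by the third slot to the outside; the inner expression is then a genuine double limit over $\alpha,\beta$ with a frozen third argument, to which the first hypothesis applies to interchange the $\alpha$- and $\beta$-limits. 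Reassembling the three limits in the new order reproduces $\lim_\beta\lim_\alpha\langle w^*,f(x_\alpha,y_\beta,m(x_\alpha,y_\beta))\rangle=\langle\theta^{r***r}(x^{**},y^{**}),w^*\rangle$, giving $\theta^{***}=\theta^{r***r}$.

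The step I expect to be the \emph{main obstacle} is precisely this decoupling: justifying rigorously, rather than merely formally, that the limit produced by the coupled argument $m(x_\alpha,y_\beta)$ can be separated out and relocated by means of $f^{****}=f^{t****s}$. This is exactly where the weak$^*$--weak$^*$ continuity of the slot maps of $m$ must be combined with the full strength of the second hypothesis; once the third slot has been decoupled, the interchange supplied by the first hypothesis is routine and the conclusion that $\theta$ is Arens regular follows.
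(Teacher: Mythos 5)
Your reduction and your reading of the two hypotheses are correct, and they run parallel to the paper's own argument: the paper also converts $f^{t***r}=f^{r***t}$ into the statement that the $\alpha$- and $\beta$-limits may be interchanged (it records this as $f^{t****s}=f^{r****r}$, which is exactly your section-wise Arens regularity paired against an arbitrary $z^{**}$), and it converts $f^{****t**s}=f^{t**s****}$ into $f^{****}=f^{t****s}$ by the computation of Theorem \ref{theorem4}, concluding $f^{****}=f^{r****r}$. The genuine gap is the step you yourself flag as the main obstacle and then leave open: decoupling the third argument $m(x_\alpha,y_\beta)$ from the indices $\alpha,\beta$. The mechanism you sketch --- pass to a triple net, use the weak$^{*}$--weak$^{*}$ continuity of $m(x_\alpha,\cdot)^{**}$, and relocate the third-slot limit by $f^{****}=f^{t****s}$ --- does not work as stated: every natural extension of $f$ is only \emph{separately} weak$^{*}$ continuous, and the identities $f^{****}=f^{t****s}=f^{r****r}$ are statements about iterated limits over three \emph{independent} indices. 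None of them says anything about a net whose third entry is a function of the first two indices, and separate weak$^{*}$ continuity never permits replacing $m(x_\alpha,y_\beta)$ by its weak$^{*}$ limit while the $\alpha$- and $\beta$-limits are still pending. So your proposal is a correct setup plus an unfilled hole at the decisive point.

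Two further remarks. First, you have in fact located the soft spot of the paper itself: its proof disposes of precisely this step in one unexplained line, writing $\lim_\alpha\lim_\beta\langle w^{*},f(x_\alpha,y_\beta,m(x_\alpha,y_\beta))\rangle=\langle f^{****}(x^{**},y^{**},m^{***}(x_\alpha,y_\beta)),w^{*}\rangle$ (note the dangling indices inside $m^{***}$), then swaps $f^{****}$ for $f^{r****r}$ and reads the computation backwards; the decoupling you could not justify is assumed there without comment. Second, the hypotheses are degenerate: since $m$ is bilinear, $\theta(x,\lambda y)=f(x,\lambda y,\lambda m(x,y))=\lambda^{2}\theta(x,y)$, and bilinearity of $\theta$ forces $\lambda^{2}\theta(x,y)=\lambda\theta(x,y)$ for every scalar $\lambda$, hence $\theta\equiv 0$, which is trivially Arens regular. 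That observation is the only rigorous way to finish along these lines, but it also shows the theorem has no nontrivial instances --- something neither your proposal nor the paper's proof notices.
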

\begin{proof}
Using the equality $f^{t***r}=f^{r***t}$, we have
\begin{eqnarray*}
\langle f^{t****s}(x^{**},y^{**},z^{**}),w^{*}\rangle &=&\langle z^{**},f^{t***}(x^{**},y^{**},w^{*})\rangle\\
&=&\langle z^{**},f^{t***r}(w^{*},y^{**},x^{**})\rangle\\
&=&\langle z^{**},f^{r***t}(w^{*},y^{**},x^{**})\rangle\\
&=&\langle z^{**},f^{r***}(y^{**},x^{**},w^{*},)\rangle\\
&=&\langle f^{r****r}(x^{**},y^{**},z^{**}),w^{*}\rangle.
\end{eqnarray*}
It follows that $f^{t****s}=f^{r****r}$. In the other hand, using the equality $f^{****t**s}=f^{t**s****}$ we have 
\begin{eqnarray*}
\langle f^{t****s}(x^{**},y^{**},z^{**}),w^{*}\rangle &=&\langle z^{**},f^{t***}(x^{**},y^{**},w^{*})\rangle\\
&=&\lim\limits_{\gamma}\langle f^{t***}(x^{**},y^{**},w^{*}),z_{\gamma}\rangle\\
&=&\lim\limits_{\gamma}\langle x^{**},f^{t**s}(w^{*},z_{\gamma},y^{**})\rangle\\
&=&\lim\limits_{\gamma}\langle f^{t**s**}(y^{**},x^{**},w^{*}),z_{\gamma} \rangle\\
&=&\langle z^{**},f^{t**s**}(y^{**},x^{**},w^{*})\rangle\\
&=&\langle f^{t**s****}(w^{*},z^{**},y^{**}),x^{**})\rangle\\
&=&\langle f^{****t**s}(w^{*},z^{**},y^{**}),x^{**})\rangle\\
&=&\langle f^{****t**}(y^{**},w^{*},z^{**}),x^{**})\rangle\\
&=&\langle f^{****}(x^{**},y^{**},z^{**}),w^{*})\rangle.
\end{eqnarray*}
Thus $f^{t****s}=f^{****}$. Therefore $f^{r****r}=f^{****}$. Now we have
\begin{eqnarray*}
\langle \theta^{***}(x^{**},y^{**}),w^{*}\rangle &=&\lim\limits_{\alpha}\lim\limits_{\beta}\langle w^{*},\theta(x_{\alpha},y_{\beta})\rangle\\
&=&\lim\limits_{\alpha}\lim\limits_{\beta}\langle w^{*},f(x_{\alpha},y_{\beta},m(x_{\alpha},y_{\beta}))\rangle\\
&=&\langle f^{****}(x^{**},y^{**},m^{***}(x_{\alpha},y_{\beta})),w^{*}\rangle\\
&=&\langle f^{r****r}(x^{**},y^{**},m^{***}(x_{\alpha},y_{\beta})),w^{*}\rangle\\
&=&\lim\limits_{\beta}\lim\limits_{\alpha}\langle w^{*},f(x_{\alpha},y_{\beta},m(x_{\alpha},y_{\beta}))\rangle\\
&=&\lim\limits_{\beta}\lim\limits_{\alpha}\langle w^{*},\theta(x_{\alpha},y_{\beta})\rangle=\langle \theta^{r***r}(x^{**},y^{**}),w^{*}\rangle.
\end{eqnarray*}
It follows that $\theta$ is Arens regular.
\end{proof}

\section{Completely regular maps}
\label{se4}

In this section, we provide a criterion for the completely regularity of a bounded tri-linear map.
\begin{theorem}\label{theorem7}
For a bounded tri-linear map $f:X\times Y\times Z\longrightarrow W$  the following statements are equivalent:
\begin{enumerate}
\item $f$ is completely regular.
\item $f^{*****}(W^{*},X^{**},Y^{**})\subseteq Z^{*}$ and $f^{******}(Z^{**},W^{*},X^{**})\subseteq Y^{*}$.
\item $f^{i*****}(W^{*},Y^{**},X^{**})\subseteq Z^{*}$ and $f^{i******}(Z^{**},W^{*},Y^{**})\subseteq X^{*}$.
\item $f^{j*****}(W^{*},X^{**},Z^{**})\subseteq Y^{*}$ and $f^{j******}(Y^{**},W^{*},X^{**})\subseteq Z^{*}$.
\item $f^{r*****}(W^{*},Z^{**},Y^{**})\subseteq X^{*}$ and $f^{r******}(X^{**},W^{*},Z^{**})\subseteq Y^{*}$.
\item $f^{t*****}(W^{*},Z^{**},X^{**})\subseteq Y^{*}$ and $f^{t******}(Y^{**},W^{*},Z^{**})\subseteq X^{*}$.
\item $f^{s*****}(W^{*},Y^{**},Z^{**})\subseteq X^{*}$ and $f^{s******}(X^{**},W^{*},Y^{**})\subseteq Z^{*}$.
\end{enumerate}
\end{theorem}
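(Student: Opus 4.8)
The plan is to prove the central equivalence (1) $\Leftrightarrow$ (2) in full, and then to obtain (1) $\Leftrightarrow$ (3), \ldots, (1) $\Leftrightarrow$ (7) essentially for free. The point is that complete regularity is invariant under the flips: the six natural extensions of $f^{i},f^{j},f^{r},f^{t},f^{s}$ are, one for one, relabellings of the six natural extensions of $f$, so $f$ is completely regular if and only if any one of its flips is. A direct inspection of adjoints shows that statement (2) written for the flip $f^{i}$ (resp.\ $f^{j},f^{r},f^{t},f^{s}$) is exactly statement (3) (resp.\ (4),(5),(6),(7)) for $f$; for instance, for $f^{t}:Z\times X\times Y\to W$ statement (2) reads $f^{t*****}(W^{*},Z^{**},X^{**})\subseteq Y^{*}$ and $f^{t******}(Y^{**},W^{*},Z^{**})\subseteq X^{*}$, which is precisely (6). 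Hence, once (1) $\Leftrightarrow$ (2) is established for an arbitrary bounded tri-linear map, applying it to each flip delivers the remaining equivalences.

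For (1) $\Rightarrow$ (2) there is nothing to do: if all natural extensions coincide then in particular $f^{****}=f^{t****s}$ and $f^{****}=f^{i****i}$, and a short adjoint computation in the spirit of Theorem \ref{theorem1} shows these two equalities are exactly the two range inclusions in (2). The substance is (2) $\Rightarrow$ (1). First I would decode each inclusion as a separate weak$^{*}$-continuity statement about $\Phi:=f^{****}$. Unwinding the definitions, $f^{*****}(W^{*},X^{**},Y^{**})\subseteq Z^{*}$ says precisely that $z^{**}\mapsto\langle w^{*},\Phi(x^{**},y^{**},z^{**})\rangle$ is weak$^{*}$-continuous for every $w^{*},x^{**},y^{**}$, while $f^{******}(Z^{**},W^{*},X^{**})\subseteq Y^{*}$ says that $y^{**}\mapsto\langle w^{*},\Phi(x^{**},y^{**},z^{**})\rangle$ is weak$^{*}$-continuous for every $w^{*},x^{**},z^{**}$. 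Pulling the corresponding limit to the front, as in the proof of Theorem \ref{theorem1}, these translate into $\Phi=f^{t****s}$ and $\Phi=f^{i****i}$ respectively, so (2) already forces two of the natural extensions to agree with $f^{****}$.

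The key step, and the only real obstacle, is to upgrade these two one-variable continuities to a full interchange of all three limits. Here I would invoke the fact recorded in the Introduction that $\Phi=f^{****}$ is \emph{automatically} weak$^{*}$-continuous in its first (outermost) variable $x^{**}$ for all $y^{**},z^{**}$. Combined with the two inclusions of (2), this makes $\Phi$ separately weak$^{*}$-continuous in each of the three variables, and crucially the continuity in each variable holds \emph{uniformly} in the other two. Consequently, starting from $\langle w^{*},\Phi(x^{**},y^{**},z^{**})\rangle$ I can extract the three defining limits one at a time in whatever order I please: each extraction is licensed by the continuity in the variable being pulled out, and freezing the already-extracted variables to genuine elements of $X,Y,Z$ does not disturb the remaining continuities. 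Running through all six orders shows that $\langle w^{*},\Phi\rangle$ equals each of the six iterated limits defining $f^{i****i},f^{j****j},f^{r****r},f^{****},f^{t****s},f^{s****t}$. Therefore all six natural extensions coincide, that is, $f$ is completely regular.

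The delicate points to watch are that the range inclusions genuinely give continuity for \emph{all} values of the frozen variables (so that partial evaluations at $x_{\alpha},y_{\beta},z_{\gamma}$ remain continuous and the reordering is legitimate), and that the bookkeeping of adjoints correctly matches statement (2) for each flip with the corresponding statement among (3)--(7); both are routine but must be verified carefully. With the order-independence of the iterated limits in hand the equivalence (1) $\Leftrightarrow$ (2) is complete, and the flip argument of the first paragraph then closes the theorem.
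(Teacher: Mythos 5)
Your proposal is correct and is essentially the paper's own argument: the core equivalence $(1)\Leftrightarrow(2)$ is handled the same way --- adjoint identifications for $(1)\Rightarrow(2)$, and for $(2)\Rightarrow(1)$ the interchange of iterated limits obtained by pulling in one limit at a time, using the two range inclusions together with the automatic weak$^{*}$-continuity of $f^{****}$ in its first variable --- and your flip-relabelling of items (3)--(7) simply makes explicit what the paper compresses into ``the other parts have the same argument.'' Two small wording corrections: no \emph{uniformity} of the separate continuities is needed or available (each extraction is a pointwise application of weak$^{*}$-continuity in the variable being extracted, which is legitimate precisely because the inclusions give that continuity for \emph{all} bidual values of the frozen variables), and during the interchange it is the \emph{not-yet-extracted} variables that sit at genuine elements of $X,Y,Z$ while the already-extracted ones sit at bidual elements --- the reverse of what you wrote.
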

\begin{proof}
We prove only $(1)\Rightarrow (2)$ and $(2)\Rightarrow (1)$ ,the other parts $(1)\Rightarrow (3)$ and $(3)\Rightarrow (1)$ , ..., $(1)\Rightarrow (7)$ and $(7)\Rightarrow (1)$ have the same argument.

(1) $\Rightarrow$ (2) Since $f$ is competetly regular, thus all natural extensions are equal. In particular, $f^{****}=f^{i****i}$. So for every $x^{**}\in X^{**}, y^{**}\in Y^{**}, z^{**}\in Z^{**}$ and $w^{*}\in W^{*}$ we have
\begin{eqnarray*}
&\langle & f^{******}(z^{**},w^{*},x^{**}),y^{**}\rangle=\langle z^{**},f^{*****}(w^{*},x^{**},y^{**})\rangle\\
&=&\langle w^{*},f^{****}(x^{**},y^{**},z^{**})\rangle=\langle w^{*},f^{i****i}(x^{**},y^{**},z^{**})\rangle\\
&=&\langle f^{i****}(y^{**},x^{**},z^{**}),w^{*}\rangle=\langle f^{i***}(x^{**},z^{**},w^{*}),y^{**}\rangle\\
&=&\langle f^{i***s}(z^{**},w^{*},x^{**}),y^{**}\rangle.
\end{eqnarray*}
Therefore $f^{******}=f^{i***s}$. Since $f^{i***s}(Z^{**},W^{*},X^{**})\subseteq Y^{*}$, thus $f^{******}(Z^{**},\\W^{*},X^{**})\subseteq Y^{*}$. Similarly, if $f^{****}=f^{r****r}$ then $f^{*****}=f^{r***r}$ and since $f^{r***r}(W^{*},X^{**},Y^{**})\subseteq Z^{*}$ thus $f^{*****}(W^{*},X^{**},Y^{**})\subseteq Z^{*}.$\\
(2) $\Rightarrow$ (1), let (2) holds and let $\{x_{\alpha} \}, \{y_{\beta} \}$ and $\{z_{\gamma} \}$ be nets in $X, Y$ and $Z$  which converge to $x^{**}\in X^{**},y^{**}\in Y^{**}$ and $z^{**}\in Z^{**}$  in the $w^{*}-$topologies, respectively. Then we have
 \begin{eqnarray*}
&\langle &f^{s****t}(x^{**},y^{**},z^{**}),w^{*}\rangle=\lim\limits_\beta\lim\limits_\gamma\lim\limits_\alpha \langle f(x_\alpha,y_\beta,z_{\gamma}),w^{*} \rangle\\
&=&\lim\limits_\beta\lim\limits_\gamma\lim\limits_\alpha \langle f^{*}(w^{*},x_{\alpha},y_{\beta}),z_{\gamma})\rangle=\lim\limits_\beta\lim\limits_\gamma\lim\limits_\alpha \langle f^{**}(z_{\gamma},w^{*},x_{\alpha}),y_{\beta}\rangle\\
&=&\lim\limits_\beta\lim\limits_\gamma\lim\limits_\alpha\langle f^{***}(y_{\beta},z_{\gamma},w^{*}),x_{\alpha}\rangle=\lim\limits_\beta\lim\limits_\gamma\langle x^{**},f^{***}(y_{\beta},z_{\gamma},w^{*}) \rangle\\
&=&\lim\limits_\beta\lim\limits_\gamma\langle f^{****}(x^{**},y_{\beta},z_{\gamma}),w^{*}\rangle=\lim\limits_\beta\lim\limits_\gamma\langle f^{*****}(w^{*},x^{**},y_{\beta}),z_{\gamma}\rangle\\
&=&\lim\limits_\beta\langle z^{**},f^{*****}(w^{*},x^{**},y_{\beta})\rangle=\lim\limits_\beta\langle f^{******}(z^{**},w^{*},x^{**}),y_{\beta}\rangle\\
&=&\langle f^{******}(z^{**},w^{*},x^{**}),y^{**}\rangle=\langle z^{**},f^{*****}(w^{*},x^{**},y^{**})\\
&=&\langle f^{****}(x^{**},y^{**},z^{**}),w^{*}\rangle.
\end{eqnarray*}
It follows that $f^{s****t}=f^{****}$. A similar argument applies equality $f^{t****s}=f^{****}, f^{i****i}=f^{****}, f^{j****j}=f^{****}$ and $f^{r****r}=f^{****}$. Therefore $f$ is completely regular.
\end{proof}
As immediate consequences of the Theorem \ref{theorem7} we have the next corollaries.
\begin{corollary}\label{corollary6}
A bounded tri-linear map $f$ is completely regular if and only if $f^{i}$ (or $f^{j}$ or $f^{r}$ or $f^{t}$ or $f^{s}$) is completely regular.
\end{corollary}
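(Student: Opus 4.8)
The plan is to obtain the equivalence essentially for free by reading off the two characterizations already packaged in Theorem \ref{theorem7}, rather than by manipulating iterated weak$^*$ limits by hand. The underlying reason the statement is true is that the six natural extensions of $f$ and the six natural extensions of a flip such as $f^i$ are literally the same collection of maps $X^{**}\times Y^{**}\times Z^{**}\longrightarrow W^{**}$ (each is an iterated $w^*$-limit over the three nets $\{x_\alpha\},\{y_\beta\},\{z_\gamma\}$, and flipping only re-indexes which variable sits in which slot). Since ``completely regular'' means precisely that this whole collection collapses to a single map, a symmetric condition, it should be insensitive to passing to a flip. I would, however, route the argument through Theorem \ref{theorem7} to keep the bookkeeping airtight.

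Concretely, for the $f^i$ case I would proceed as follows. First, Theorem \ref{theorem7}, in the equivalence $(1)\Leftrightarrow(3)$, already tells us that $f$ is completely regular if and only if $f^{i*****}(W^{*},Y^{**},X^{**})\subseteq Z^{*}$ and $f^{i******}(Z^{**},W^{*},Y^{**})\subseteq X^{*}$. Next, I would apply Theorem \ref{theorem7} a second time, now to the bounded tri-linear map $f^{i}:Y\times X\times Z\longrightarrow W$, using its own equivalence $(1)\Leftrightarrow(2)$: with $Y,X,Z$ playing the roles of the first, second and third variable spaces, condition $(2)$ for $f^i$ states that $f^i$ is completely regular exactly when $(f^i)^{*****}(W^{*},Y^{**},X^{**})\subseteq Z^{*}$ and $(f^i)^{******}(Z^{**},W^{*},Y^{**})\subseteq X^{*}$. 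Because the adjoint symbols are defined compositionally, $(f^i)^{*}=f^{i*}$ and hence $(f^i)^{*****}=f^{i*****}$ and $(f^i)^{******}=f^{i******}$, so these two containments are \emph{verbatim} the same pair of inclusions produced by condition $(3)$ for $f$. Matching them gives: $f$ completely regular $\iff f^i$ completely regular.

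The remaining four cases are handled by the identical mechanism, simply pairing the right condition of Theorem \ref{theorem7} for $f$ with condition $(2)$ of Theorem \ref{theorem7} applied to the corresponding flip: condition $(4)$ with $f^{j}:X\times Z\times Y\longrightarrow W$, condition $(5)$ with $f^{r}:Z\times Y\times X\longrightarrow W$, condition $(6)$ with $f^{t}:Z\times X\times Y\longrightarrow W$, and condition $(7)$ with $f^{s}:Y\times Z\times X\longrightarrow W$. In each case the containments coincide symbol-for-symbol once the flipped variable order is substituted into condition $(2)$. I expect the only real point of care, and thus the main (if routine) obstacle, to be this bookkeeping: after each flip one must correctly identify which of $X,Y,Z$ and which of $X^{**},Y^{**},Z^{**}$ occupies each slot, and then verify that the resulting two inclusions are exactly the pair listed in the matching item of Theorem \ref{theorem7}. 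Once the role assignment is pinned down the equivalence is immediate, and since complete regularity of $f$ is characterized uniformly through $f^i,f^j,f^r,f^t,f^s$, the stated ``or'' in the corollary follows at once.
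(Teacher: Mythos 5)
Your proposal is correct and is essentially the paper's own argument: the paper offers no separate proof, stating the corollary as an immediate consequence of Theorem \ref{theorem7}, and the intended mechanism is exactly your observation that item (3) (resp.\ (4)--(7)) of Theorem \ref{theorem7} for $f$ is verbatim item (2) of Theorem \ref{theorem7} applied to the flip $f^{i}$ (resp.\ $f^{j}, f^{r}, f^{t}, f^{s}$), since $(f^{i})^{*}=f^{i*}$ and so on. The slot-matching you carry out is precisely the content of that ``immediate consequence.''
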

\begin{corollary}\label{corollary7}
If from $X, Y$ or $Z$ at least two reflexive then $f$ is completely regular.
\end{corollary}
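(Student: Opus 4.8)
The plan is to derive Corollary \ref{corollary7} from Theorem \ref{theorem7}: that theorem characterizes complete regularity by a pair of range inclusions, and the observation to exploit is that each such inclusion becomes automatic as soon as two of the three spaces are reflexive, because reflexivity upgrades the weak$^{*}$-weak$^{*}$ continuity statements recorded for the natural extensions in the Introduction. I would split the argument into the three cases corresponding to which pair among $X,Y,Z$ is assumed reflexive.

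First I would settle the case in which $X$ and $Y$ are reflexive, using statement (2) of Theorem \ref{theorem7}. Unravelling the adjoints gives $\langle f^{*****}(w^{*},x^{**},y^{**}),z^{**}\rangle=\langle w^{*},f^{****}(x^{**},y^{**},z^{**})\rangle$ and $\langle f^{******}(z^{**},w^{*},x^{**}),y^{**}\rangle=\langle w^{*},f^{****}(x^{**},y^{**},z^{**})\rangle$. The key point is that reflexivity of $X$ forces $x^{**}=x\in X$ and reflexivity of $Y$ forces $y^{**}=y\in Y$. The continuity assertions listed for $f^{****}$ then apply: $z^{**}\mapsto f^{****}(x,y,z^{**})$ is weak$^{*}$-weak$^{*}$ continuous (needing $x\in X$ and $y\in Y$), so the first functional lies in $Z^{*}$; and $y^{**}\mapsto f^{****}(x,y^{**},z^{**})$ is weak$^{*}$-weak$^{*}$ continuous (needing only $x\in X$), so the second functional lies in $Y^{*}$. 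Hence $f^{*****}(W^{*},X^{**},Y^{**})\subseteq Z^{*}$ and $f^{******}(Z^{**},W^{*},X^{**})\subseteq Y^{*}$, and condition (2) yields that $f$ is completely regular.

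Since this argument used only that the first two factors of the map are reflexive, the remaining two cases reduce to it via the flip maps together with Corollary \ref{corollary6}. If $X$ and $Z$ are reflexive, I would apply the established case to $f^{j}:X\times Z\times Y\to W$, whose first two factors $X,Z$ are reflexive; thus $f^{j}$ is completely regular and Corollary \ref{corollary6} returns complete regularity of $f$. Similarly, if $Y$ and $Z$ are reflexive I would apply it to $f^{s}:Y\times Z\times X\to W$. (Alternatively, one may verify these two cases directly from statements (6) and (7) of Theorem \ref{theorem7}, which a parallel computation shows hold precisely when $X,Z$, respectively $Y,Z$, are reflexive.)

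The only place requiring care is the bookkeeping in the middle step: one must match each range inclusion to the correct variable of $f^{****}$ and check that reflexivity of exactly the two chosen spaces is what converts the generic double-dual arguments into genuine elements of $X$, $Y$ or $Z$, thereby activating the appropriate weak$^{*}$-weak$^{*}$ continuity. Once this correspondence is pinned down no limit interchange or estimate is needed beyond what Theorem \ref{theorem7} already provides, so this is the main conceptual obstacle rather than a computational one.
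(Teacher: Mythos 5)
Your proof is correct, but it is more roundabout than the argument the paper intends when it calls this an ``immediate consequence'' of Theorem \ref{theorem7}. The paper's implicit mechanism is the same one behind Corollary \ref{corollary1}: if $E$ is reflexive then $E^{***}=E^{*}$, so any inclusion of the form $g(\cdots)\subseteq E^{*}$, for a functional that a priori lives in $E^{***}$, holds trivially. Each reflexive pair then matches a condition of Theorem \ref{theorem7} whose two target spaces are exactly the duals of the two reflexive spaces: $Y,Z$ reflexive makes (2) trivial, $X,Z$ reflexive makes (3) (or (7)) trivial, and $X,Y$ reflexive makes (5) (or (6)) trivial; no continuity argument and no flip transfer is needed. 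You instead verify condition (2) for the pair $X,Y$ --- which is \emph{not} the pair for which (2) collapses trivially --- by identifying $x^{**}=x$, $y^{**}=y$ and invoking the weak$^{*}$-weak$^{*}$ continuity of $z^{**}\mapsto f^{****}(x,y,z^{**})$ and of $y^{**}\mapsto f^{****}(x,y^{**},z^{**})$, so that the two functionals in question are weak$^{*}$-continuous and hence lie in $Z^{*}$ and $Y^{*}$ respectively; this is valid, and your bookkeeping of which continuity assertion needs which variables in the base space is accurate. The transfer of the remaining two cases through $f^{j}$ and $f^{s}$ together with Corollary \ref{corollary6} is also sound. What your route buys is a uniform use of a single condition of Theorem \ref{theorem7}; what it costs is extra machinery (the continuity properties of the extension plus Corollary \ref{corollary6}) where the collapse $E^{***}=E^{*}$ finishes each case in one line. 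One small inaccuracy: in your parenthetical you say conditions (6) and (7) hold ``precisely when'' $X,Z$, respectively $Y,Z$, are reflexive; these reflexivity assumptions are sufficient, not necessary, so ``whenever'' is what you are entitled to claim.
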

\begin{corollary}\label{corollary8}
If $X$ is reflexive then the bounded tri-linear map $g:X\times X\times X\longrightarrow X$ is completely regular.
\end{corollary}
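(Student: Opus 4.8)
The plan is to obtain this as an immediate specialization of Corollary \ref{corollary7}. The map in question is $g:X\times X\times X\longrightarrow X$, so the three domain factors playing the roles of $X$, $Y$, and $Z$ in Corollary \ref{corollary7} are all literally the single space $X$. Since $X$ is assumed reflexive, every one of these three factors is reflexive; in particular \emph{at least two} of them are reflexive, which is exactly the hypothesis required by Corollary \ref{corollary7}. Applying that corollary with $X=Y=Z$ (and codomain $W=X$, whose properties are irrelevant to the hypothesis) yields at once that $g$ is completely regular.

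First I would state the identification explicitly: take $X$, $Y$, $Z$ all equal to the given reflexive space. Then I would observe that the condition ``at least two of $X,Y,Z$ are reflexive'' is trivially met, since in fact all three coincide with the reflexive space $X$. The conclusion then follows directly by quoting Corollary \ref{corollary7}. There is essentially no obstacle here: the result is a pure instantiation, and the only point worth flagging is that reflexivity of the one underlying space $X$ automatically forces the ``two reflexive factors'' requirement of Corollary \ref{corollary7}, so no separate computation with the adjoints $f^{*****}$, $f^{******}$ from Theorem \ref{theorem7} is needed.
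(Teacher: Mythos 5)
Your proposal is correct and matches the paper's intended argument: Corollary \ref{corollary8} is listed immediately after Corollary \ref{corollary7} as a consequence of Theorem \ref{theorem7}, and the intended derivation is precisely the instantiation $X=Y=Z$ in Corollary \ref{corollary7}, where reflexivity of the single space $X$ trivially supplies the ``at least two reflexive factors'' hypothesis. No further computation with the adjoints is needed, exactly as you observe.
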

\begin{corollary}\label{corollary9}
Let $X, Y, Z, W$ and $S$ be Banach spaces and let $f:X\times Y\times Z\longrightarrow W$, $g:X\times S\times Z\longrightarrow W$ and $K:X\times Y\times S\longrightarrow W$ be bounded tri-linear mappings and $h_{1}:Y\longrightarrow S, h_{2}:Z\longrightarrow S$ be bounded linear mappings shuch that $$f(x,y,z)=K(x,y,h_{2}(z))=g(x,h_{1}(y),z).$$
If $h_{1}$ and $h_{2}$ are weakly compact then $f$ is completely regular.
\end{corollary}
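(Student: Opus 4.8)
The plan is to verify condition (2) of Theorem~\ref{theorem7}, that is, the two inclusions $f^{*****}(W^{*},X^{**},Y^{**})\subseteq Z^{*}$ and $f^{******}(Z^{**},W^{*},X^{**})\subseteq Y^{*}$, by pushing each of the two factorizations through the successive adjoints exactly as in the proof of Theorem~\ref{theorem3}, but applied separately in the $Z$-variable and the $Y$-variable and carried as far as the fifth, respectively sixth, adjoint. The single external ingredient I would reuse is the consequence of weak compactness recorded there: since $h_{1}$ and $h_{2}$ are weakly compact, so are $h_{1}^{*}$ and $h_{2}^{*}$, whence $h_{1}^{***}(S^{***})\subseteq Y^{*}$ and $h_{2}^{***}(S^{***})\subseteq Z^{*}$.

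For the first inclusion I would begin from $f(x,y,z)=K(x,y,h_{2}(z))$ and take adjoints one level at a time. At the first level the defining pairing gives $f^{*}(w^{*},x,y)=h_{2}^{*}(K^{*}(w^{*},x,y))$; dualizing against $z^{**}$ moves the operator into the $Z^{**}$-slot as $h_{2}^{**}$, and iterating through $f^{**},f^{***},f^{****}$ yields $f^{****}(x^{**},y^{**},z^{**})=K^{****}(x^{**},y^{**},h_{2}^{**}(z^{**}))$. One further adjoint then produces $f^{*****}(w^{*},x^{**},y^{**})=h_{2}^{***}\!\left(K^{*****}(w^{*},x^{**},y^{**})\right)$. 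As $K^{*****}(w^{*},x^{**},y^{**})\in S^{***}$ and $h_{2}^{***}(S^{***})\subseteq Z^{*}$, this gives $f^{*****}(W^{*},X^{**},Y^{**})\subseteq Z^{*}$.

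The second inclusion is symmetric, now starting from $f(x,y,z)=g(x,h_{1}(y),z)$ and tracking $h_{1}$ through the $Y$-slot. The same telescoping of adjoints gives $f^{****}(x^{**},y^{**},z^{**})=g^{****}(x^{**},h_{1}^{**}(y^{**}),z^{**})$ and, after two more adjoints, $f^{******}(z^{**},w^{*},x^{**})=h_{1}^{***}\!\left(g^{******}(z^{**},w^{*},x^{**})\right)$; since $g^{******}(z^{**},w^{*},x^{**})\in S^{***}$ and $h_{1}^{***}(S^{***})\subseteq Y^{*}$, we obtain $f^{******}(Z^{**},W^{*},X^{**})\subseteq Y^{*}$. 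Both inclusions of Theorem~\ref{theorem7}(2) then hold, so $f$ is completely regular.

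I expect the only real labor to be bookkeeping: at each adjoint one must correctly identify which slot the operator currently occupies and whether it appears as $h_{i}^{*}$, $h_{i}^{**}$, or $h_{i}^{***}$, the appearance of $h_{i}^{***}$ at the last step being exactly what converts weak compactness into the required $Z^{*}$- or $Y^{*}$-valued conclusion. The one structural point worth flagging is that the two factorizations feed two \emph{different} inclusions of Theorem~\ref{theorem7}(2) --- the $Z$-factorization controls the $Z^{*}$-valued condition and the $Y$-factorization the $Y^{*}$-valued one --- so neither weakly compact map can be dispensed with; both are needed, one per inclusion. Beyond Gantmacher's theorem, already used in Theorem~\ref{theorem3}, no analytic difficulty should arise.
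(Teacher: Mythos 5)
Your proposal is correct and follows essentially the same route as the paper: the paper also verifies condition (2) of Theorem~\ref{theorem7} by deriving $f^{*****}(W^{*},X^{**},Y^{**})=h_{2}^{***}\bigl(K^{*****}(W^{*},X^{**},Y^{**})\bigr)$ from the $K$-factorization and $f^{******}(Z^{**},W^{*},X^{**})=h_{1}^{***}\bigl(g^{******}(Z^{**},W^{*},X^{**})\bigr)$ from the $g$-factorization, then invokes weak compactness to land in $Z^{*}$ and $Y^{*}$ respectively. The only difference is cosmetic: the paper establishes the fifth-adjoint identity by a chain of iterated weak$^{*}$ limits of nets, whereas you telescope the adjoint identities level by level, which amounts to the same computation.
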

\begin{proof}
Let $\{x_{\alpha} \}, \{y_{\beta} \}$ and $\{z_{\gamma} \}$ are nets in $X, Y$ and $Z$  which converge to $x^{**}\in X^{**},y^{**}\in Y^{**}$ and $z^{**}\in Z^{**}$  in the $w^{*}-$topologies, respectively. Then we have 
 \begin{eqnarray*}
\langle f^{*****}(w^{*},x^{**},y^{**}),z^{**}\rangle &=&\langle f^{****}(x^{**},y^{**},z^{**}),w^{*} \rangle\\
&=&\lim\limits_\alpha\lim\limits_\beta\lim\limits_\gamma \langle f(x_\alpha,y_\beta,z_{\gamma}),w^{*} \rangle\\
&=&\lim\limits_\alpha\lim\limits_\beta\lim\limits_\gamma \langle K(x_\alpha,y_\beta,h_{2}(z_{\gamma})),w^{*} \rangle\\
&=&\lim\limits_\alpha\lim\limits_\beta\lim\limits_\gamma \langle K^{*}(w^{*},x_{\alpha},y_{\beta}),h_{2}(z_{\gamma}) \rangle\\
&=&\lim\limits_\alpha\lim\limits_\beta\lim\limits_\gamma \langle h_{2}^{*}(K^{*}(w^{*},x_{\alpha},y_{\beta})),z_{\gamma}\rangle\\
&=&\lim\limits_\alpha\lim\limits_\beta \langle z^{**},h_{2}^{*}(K^{*}(w^{*},x_{\alpha},y_{\beta}))\rangle\\
&=&\lim\limits_\alpha\lim\limits_\beta \langle h_{2}^{**}(z^{**}),K^{*}(w^{*},x_{\alpha},y_{\beta})\rangle\\
&=&\lim\limits_\alpha\lim\limits_\beta \langle K^{**}(h_{2}^{**}(z^{**}),w^{*},x_{\alpha}),y_{\beta}\rangle\\
&=&\lim\limits_\alpha \langle y^{**},K^{**}(h_{2}^{**}(z^{**}),w^{*},x_{\alpha})\rangle\\
&=&\lim\limits_\alpha \langle K^{***}(y^{**},h_{2}^{**}(z^{**}),w^{*}),x_{\alpha}\rangle\\
&=&\langle x^{**},K^{***}(y^{**},h_{2}^{**}(z^{**}),w^{*})\rangle\\
&=&\langle K^{****}(x^{**},y^{**},h_{2}^{**}(z^{**})),w^{*}\rangle\\
&=&\langle K^{*****}(w^{*},x^{**},y^{**}),h_{2}^{**}(z^{**})\rangle\\
&=&\langle h_{2}^{***}(K^{*****}(w^{*},x^{**},y^{**})),z^{**}\rangle.
\end{eqnarray*}
Thus $f^{*****}(W^{*},X^{**},Y^{**})=h_{2}^{***}(K^{*****}(W^{*},X^{**},Y^{**}))$. Since the weak compactness of $h_{2}$ implies that of $h_2^{*}$, from which we have $h_{2}^{***}(S^{***})\subseteq Z^{*}$. In particular,
$h_{2}^{***}(K^{*****}(W^{*},X^{**},Y^{**}))\subseteq Z^{*}$, thus we deduce $f^{*****}(W^{*},X^{**},Y^{**})\subseteq Z^{*}$. Similarly, a direct verification reveals that $$f^{******}(Z^{**},W^{*},X^{**})=h_{1}^{***}(g^{******}(Z^{**},W^{*},X^{**})).$$ The weak compactness of $h_{1}$ implies that $f^{******}(Z^{**},W^{*},X^{**})\subseteq Y^{*}$. Now Theorem \ref{theorem7} follows that $f$ is completely regular.
\end{proof}
\begin{example}\label{example2} Let $G$ be a locally compact group and let $M(G)$ be measure algebra of $G$, see \cite[Section 2.5]{foland}.  Let the convolution for $\mu_{1},\mu_{2}\in M(G)$ defined by 
$$\int \psi d(\mu_{1}*\mu_{2})=\int\int \psi(xy)d\mu_{1}(x)d\mu_{2}(y),\ \ (\psi\in C_{0}(G)).$$
 We define the bounded tri-linear mapping $$f:M(G)\times M(G)\times M(G)\longrightarrow M(G)$$
by $f(\mu_{1},\mu_{2},\mu_{3})=\int \psi d(\mu_{1}*\mu_{2}*\mu_{3})$ for $\mu_{1},\mu_{2}$ and $\mu_{3}\in M(G)$. If $G$ is finite,  then $f$ is completely regular.
\end{example}

\section{ The fourth adjoint of a tri-derivation}
\label{se5}

Let $A$  be a Banach algebra and $\pi_{1}:A\times X \longrightarrow X$ be a bounded bilinear map. The pair $(\pi_{1},X)$ is said to be a left Banach $A-$module when $\pi_{1}(\pi_1(a,b),x)=\pi_{1}(a,\pi_{1}(b,x))$, for every $a,b\in A$ and $x\in X$.  A right Banach $A-$module may is defined similarly. Let  $\pi_{2}:X\times A \longrightarrow X$ be a bounded bilinear map. The pair $(X,\pi_{2})$ is said to be a right Banach $A-$module if $\pi_{2}(x,\pi_2(a,b))=\pi_{2}(\pi_{2}(x,a),b)$. A triple $(\pi_{1},X,\pi_{2})$ is said to be a Banach $A-$module if  $(X,\pi_{1})$ and $(X,\pi_{2})$ are left and right Banach $A-$modules, respectively, and $\pi_{1}(a,\pi_{2}(x,b))=\pi_{2}(\pi_1(a,x),b)$. (see, for instance, \cite{Mohamadzadeh} and \cite{Eshaghi})
\begin{definition}\label{definition2}
Let $(\pi_{1},X,\pi_2)$ be a  Banach $A-$module.  A tri-derivation from $A\times A\times A$ to $X$(or $X^{*}$) is a bounded tri-linear map $D:A\times A\times A\longrightarrow X$ satisfying for every $a, b,c,d \in A$,
 \begin{eqnarray*}
D(\pi (a,d),b,c)&=&\pi_{2}(D(a,b,c),d)+\pi_{1}(a,D(d,b,c)),\\
D(a,\pi (b,d),c)&=&\pi_{2}(D(a,b,c),d)+\pi_{1}(b,D(a,d,c)),\\
D(a,b,\pi(c,d))&=&\pi_{2}(D(a,b,c),d)+\pi_{1}(c,D(a,b,d)).
\end{eqnarray*}
\end{definition}
In the section, the bounded trilinear maps $\varphi_{a}$ and $\psi_{x^{*}}$ for every $a,b,c,d\in A$ and $x^{*}\in X^{*}$ define by
 \begin{eqnarray*}
&&\varphi_{a}:A\times A\times A\longrightarrow X \ \ \ \ \ \ \ \ \ \ ,\ \ \ \ \ \ \psi_{x^{*}}:A\times A\times A\longrightarrow A^{*}\\
&&\ \ (c,b,d)\longrightarrow \pi_2(D(a,b,c),d)\ \ \ \ \ \ \ \ \ \ (a,d,b)\longrightarrow D^{*}(\pi_{1}^{*}(x^{*},b),a,d)
\end{eqnarray*}
A standard argument applies to show that
\begin{enumerate}
\item If $\pi_{2}$ is Arens regular, then $\varphi_{a}^{****}=\varphi_{a}^{t****s}$ and $\varphi_{a}^{r****r}=\varphi_{a}^{i****i}$,

\item If $D^{j****j}=D^{****}$, then $\varphi_{a}^{****}=\varphi_{a}^{i****i}$ and $\varphi_{a}^{r****r}=\varphi_{a}^{t****s}$,

\item If $D^{j****j}=D^{i****i}$ or $D^{j****j}=D^{s****t}$, then $\varphi_{a}^{****}=\varphi_{a}^{i****i}$,

\item If $\pi_{2}$ is Arens regular and $D^{j****j}=D^{****}$, then $\varphi_{a}^{****}=\varphi_{a}^{r****r}$.
\end{enumerate}
For the $\psi_{x^{*}}$ have the same argument.
In the next result we provide a necessary and sufficient condition such that the fourth adjoint $D^{****}$  of a tri-derivation is again a tri-derivation.
\begin{theorem}\label{theorem8}
 Let $(\pi_{1},X,\pi_{2})$  be a Banach $A-$module and let $D:A\times A\times A\longrightarrow X$  be a tri-derivation.  Then
$D^{****}:(A^{**},\square)\times(A^{**},\square)\times(A^{**},\square)\longrightarrow X^{**}$ is a tri-derivation if and only if $\varphi_{a}^{r****r}=\varphi_{a}^{i****i}=\varphi_{a}^{s****t}$ and $\psi_{x^{*}}^{j****j}=\psi_{x^{*}}^{t****s}=\psi_{x^{*}}^{****}$. 
\end{theorem}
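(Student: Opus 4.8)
The plan is to unravel what it means for $D^{****}$ to be a tri-derivation on the first Arens dual $(A^{**},\square)$ and then reduce each of the three Leibniz identities to an equality of iterated weak$^{*}$-limits. Recall that $D^{****}$ is a tri-derivation precisely when, writing $\pi_{1}^{****}:A^{**}\times X^{**}\to X^{**}$ and $\pi_{2}^{****}:X^{**}\times A^{**}\to X^{**}$ for the extended module actions, the identity
\[
D^{****}(a''\square d'',b'',c'')=\pi_{2}^{****}(D^{****}(a'',b'',c''),d'')+\pi_{1}^{****}(a'',D^{****}(d'',b'',c'')),
\]
together with its two analogues in the second and third variables, holds for all $a'',b'',c'',d''\in A^{**}$. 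First I would fix $x^{*}\in X^{*}$ and nets $\{a_{\alpha}\},\{b_{\beta}\},\{c_{\gamma}\},\{d_{\delta}\}$ in $A$ converging weak$^{*}$ to $a'',b'',c'',d''$, and expand every extension appearing above as an iterated $w^{*}$-limit via the dictionary (1)--(6) of the introduction.

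The crux is to recognise the two summands on the right-hand side of each Leibniz identity as values of $\varphi_{a}$ and $\psi_{x^{*}}$. By definition $\langle\varphi_{a}(c,b,d),x^{*}\rangle=\langle x^{*},\pi_{2}(D(a,b,c),d)\rangle$, so the $\pi_{2}$-summand, which is common to all three identities, is exactly a value of $\varphi_{a}$; and a direct computation gives $\langle\psi_{x^{*}}(a,d,b),c\rangle=\langle x^{*},\pi_{1}(b,D(a,d,c))\rangle$, so the $\pi_{1}$-summand is a value of $\psi_{x^{*}}$. Consequently, after passing to the bidual, the $\pi_{2}$-term becomes a natural extension of the trilinear map $\varphi_{a}$ and the $\pi_{1}$-term a natural extension of $\psi_{x^{*}}$, where the precise superscript string is dictated by the order in which the four limits $\lim_{\alpha},\lim_{\beta},\lim_{\gamma},\lim_{\delta}$ are forced to be taken. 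On the left, $D^{****}$ imposes the order coming from the first Arens product, while on the right the extended module actions $\pi_{1}^{****},\pi_{2}^{****}$ and the product $\square$ impose their own orders; the identity holds if and only if these two orders yield the same value.

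I would then carry out this comparison for each of the three identities in turn. Treating $a$ (respectively $x^{*}$) as a fixed parameter keeps $\varphi_{a}$ and $\psi_{x^{*}}$ genuinely trilinear, so their admissible re-orderings are exactly the natural extensions enumerated in (1)--(6). Reading off the superscripts, the three $\pi_{2}$-summands together force $\varphi_{a}^{r****r}=\varphi_{a}^{i****i}=\varphi_{a}^{s****t}$, while the three $\pi_{1}$-summands force $\psi_{x^{*}}^{j****j}=\psi_{x^{*}}^{t****s}=\psi_{x^{*}}^{****}$. For the converse I would run each chain of equalities backwards: every manipulation is an equality of iterated weak$^{*}$-limits, so assuming the displayed equalities of natural extensions and reversing the steps recovers all three Leibniz identities for $D^{****}$, which is precisely the assertion that $D^{****}$ is a tri-derivation.

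The main obstacle I anticipate is purely the bookkeeping of adjoints and flips: for each of the several limit-orders that arise one must identify the correct superscript word (distinguishing, say, $i****i$ from $s****t$ from $r****r$), and one must verify that the outermost parameter-limit may legitimately be taken last. I would control the first point with the limit-order/superscript dictionary of the introduction, and the second with the weak$^{*}$--weak$^{*}$ separate-continuity properties recorded there, which guarantee that moving the parameter variable to the outside does not change the value. Once these routine identifications are in place, the equivalence follows directly.
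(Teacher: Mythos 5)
Your overall strategy is the one the paper itself uses: expand each Leibniz identity for $D^{****}$ into iterated weak$^{*}$-limits, recognise the $\pi_{2}$-summand as a natural extension of $\varphi_{a}$ and the $\pi_{1}$-summand as a natural extension of $\psi_{x^{*}}$, and observe that the limit-orders that occur are exactly those labelled $r****r,\ i****i,\ s****t$ for $\varphi_{a}$ and $j****j,\ t****s,\ ****$ for $\psi_{x^{*}}$. The direction ``extension equalities $\Rightarrow$ $D^{****}$ is a tri-derivation'' works exactly as you describe, and is the paper's first half: each expanded summand may be replaced by its re-ordered counterpart, so the sums agree. (A cosmetic slip: the extended module actions are the \emph{third} adjoints $\pi_{1}^{***},\pi_{2}^{***}$; the fourth adjoint of a bilinear map does not have the domain $A^{**}\times X^{**}$ you assign to it.)

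The genuine gap is in the necessity direction, where you say the three identities ``force'' the summand-by-summand equalities. Each Leibniz identity is an equality of \emph{sums} of two terms; from $A_{1}+B_{1}=A_{2}+B_{2}$ one cannot read off $A_{1}=A_{2}$ and $B_{1}=B_{2}$, so ``reading off the superscripts'' is not yet an argument. The paper isolates the summands as follows: in the first identity the expanded $\pi_{1}$-term carries the order $\alpha,\tau,\beta,\gamma$, which is already the natural order of $\pi_{1}^{***}(a^{**},D^{****}(d^{**},b^{**},c^{**}))$, so this term cancels from both sides, leaving precisely the $\pi_{2}$-equality; one must then specialise $a^{**}=a\in A$ (necessary in any case, since $\varphi_{a}$ is only defined for $a\in A$) to arrive at $\varphi_{a}^{r****r}=\varphi_{a}^{i****i}$. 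The third identity is symmetric: there the $\pi_{2}$-term matches automatically, isolating $\psi_{x^{*}}^{****}=\psi_{x^{*}}^{t****s}$. But in the second identity \emph{neither} summand matches automatically, so a further idea is required --- for instance, first specialise $b^{**}=b\in A$, which makes the two $\pi_{1}$-terms coincide by the weak$^{*}$-continuity of $\pi_{1}^{***}(b,\cdot)$, extract the equality $\varphi_{a}^{s****t}(c^{**},b,d^{**})=\varphi_{a}^{i****i}(c^{**},b,d^{**})$, upgrade it to arbitrary $b^{**}$ using the weak$^{*}$-continuity of both of these extensions in their second variable, and only then subtract the (now known) $\pi_{2}$-equality from the full identity to obtain $\psi_{x^{*}}^{j****j}=\psi_{x^{*}}^{t****s}$. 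Your proposal contains none of this cancellation and specialisation machinery, and without it the necessity half does not go through.
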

\begin{proof}
First we show that $D^{****}$ is a tri-derivation. Let $\{a_{\alpha} \}, \{b_{\beta} \}, \{c_{\gamma} \}$ and $\{d_{\tau} \}$ are bounded nets in $A$ which converge to $a^{**}, b^{**}, c^{**}$  and $d^{**}$ in $A^{**}$  in the $w^{*}-$topologies, respectively. If $\varphi_{a}^{r****r}=\varphi_{a}^{i****i}$, then we have 
 \begin{eqnarray*}
\lim\limits_\alpha\lim\limits_\tau\lim\limits_\beta\lim\limits_\gamma \langle x^{*},\pi_{2}(D(a_{\alpha},b_{\beta},c_{\gamma}),d_{\tau}) \rangle&=&\lim\limits_\alpha\lim\limits_\tau\lim\limits_\beta\lim\limits_\gamma \langle x^{*},\varphi_{a_{\alpha}}(c_{\gamma},b_{\beta},d_{\tau})\rangle\\
&=&\lim\limits_\alpha\langle \varphi_{a_{\alpha}}^{r****r}(c^{**},b^{**},d^{**}),x^{*} \rangle\\
&=&\lim\limits_\alpha\langle \varphi_{a_{\alpha}}^{i****i}(c^{**},b^{**},d^{**}),x^{*} \rangle\\
&=&\lim\limits_\alpha\lim\limits_\beta\lim\limits_\gamma\lim\limits_\tau\langle  x^{*},\varphi_{a_{\alpha}}(c_{\gamma},b_{\beta},d_{\tau})\rangle\\
&=&\lim\limits_\alpha\lim\limits_\beta\lim\limits_\gamma\lim\limits_\tau\langle  x^{*},\pi_{2}(D(a_{\alpha},b_{\beta},c_{\gamma}),d_{\tau})\rangle\\
&=&\langle \pi_{2}^{***}(D^{****}(a^{**},b^{**},c^{**}),d^{**}),x^{*}\rangle.
\end{eqnarray*}
Therefore $w^{*}-\lim\limits_\alpha\lim\limits_\tau\lim\limits_\beta\lim\limits_\gamma \pi_{2}(D(a_{\alpha},b_{\beta},c_{\gamma}),d_{\tau})=\pi_{2}^{***}(D^{****}(a^{**},b^{**},c^{**}),d^{**})$. So
\begin{eqnarray*}
&&\langle D^{****}(\pi^{***}(a^{**},d^{**}),b^{**},c^{**}),x^{*}\rangle=\lim\limits_\alpha\lim\limits_\tau\lim\limits_\beta\lim\limits_\gamma \langle x^{*},D(\pi(a_{\alpha},d_{\tau}),b_{\beta},c_{\gamma})\rangle\\
&&=\lim\limits_\alpha\lim\limits_\tau\lim\limits_\beta\lim\limits_\gamma \langle x^{*},\pi_{2}(D(a_{\alpha},b_{\beta},c_{\gamma}),d_{\tau})+\pi_{1}(a_{\alpha},D(d_{\tau},b_{\beta},c_{\gamma}))\rangle\\
&&=\lim\limits_\alpha\lim\limits_\tau\lim\limits_\beta\lim\limits_\gamma \langle x^{*},\pi_{2}(D(a_{\alpha},b_{\beta},c_{\gamma}),d_{\tau})\rangle\\
 &&+\lim\limits_\alpha\lim\limits_\tau\lim\limits_\beta\lim\limits_\gamma\langle x^{*},\pi_{1}(a_{\alpha},D(d_{\tau},b_{\beta},c_{\gamma}))\rangle\\
&&=\langle \pi_{2}^{***}(D^{****}(a^{**},b^{**},c^{**}),d^{**}),x^{*}\rangle+\langle \pi_{1}^{***}(a^{**},D^{****}(d^{**},b^{**},c^{**})),x^{*}\rangle\\
&&=\langle \pi_{2}^{***}(D^{****}(a^{**},b^{**},c^{**}),d^{**})+\pi_{1}^{***}(a^{**},D^{****}(d^{**},b^{**},c^{**})),x^{*}\rangle.
\end{eqnarray*}
Thus
\begin{eqnarray*}
 D^{****}(\pi^{***}(a^{**},d^{**}),b^{**},c^{**})&=&\pi_{2}^{***}(D^{****}(a^{**},b^{**},c^{**}),d^{**})\\
&+&\pi_{1}^{***}(a^{**},D^{****}(d^{**},b^{**},c^{**})).
\end{eqnarray*}
Using the equality $\varphi_{a}^{s****t}=\varphi_{a}^{i****i}$ we have
\begin{eqnarray*}
w^{*}-\lim\limits_\alpha\lim\limits_\beta\lim\limits_\tau\lim\limits_\gamma  \pi_{2}(D(a_{\alpha},b_{\beta},c_{\gamma}),d_{\tau})&=&w^{*}-\lim\limits_\alpha\lim\limits_\beta\lim\limits_\tau\lim\limits_\gamma \varphi_{a_{\alpha}}(c_{\gamma},b_{\beta},d_{\tau})\\
&=&w^{*}-\lim\limits_\alpha \varphi_{a_{\alpha}}^{s****t}(c^{**},b^{**},d^{**})\\
&=&w^{*}-\lim\limits_\alpha \varphi_{a_{\alpha}}^{i****i}(c^{**},b^{**},d^{**})\\
&=&w^{*}-\lim\limits_\alpha\lim\limits_\beta\lim\limits_\gamma \lim\limits_\tau \varphi_{a_{\alpha}}(c_{\gamma},b_{\beta},d_{\tau})\\
&=&\pi_{2}^{***}(D^{****}(a^{**},b^{**},c^{**}),d^{**}).
\end{eqnarray*}
In the other hand, $\psi_{x^{*}}^{j****j}=\psi_{x^{*}}^{t****s}$ implies that
\begin{eqnarray*}
&&\lim\limits_{\alpha}\lim\limits_{\beta}\lim\limits_{\tau}\lim\limits_{\gamma} \langle x^{*},\pi_{1}(b_{\beta},D(a_{\alpha},d_{\tau},c_{\gamma}))\rangle\\
&&=\lim\limits_\alpha\lim\limits_\beta\lim\limits_\tau\lim\limits_\gamma \langle \pi_{1}^{*}(x^{*},b_{\beta}),D(a_{\alpha},d_{\tau},c_{\gamma})\rangle\\
&&=\lim\limits_{\alpha}\lim\limits_{\beta}\lim\limits_{\tau}\lim\limits_{\gamma} \langle D^{*}({\pi_{1}}^{*}(x^{*},b_{\beta}),a_{\alpha},d_{\tau}),c_{\gamma}\rangle\\
&&=\lim\limits_\alpha\lim\limits_\beta\lim\limits_\tau \langle c^{**},D^{*}(\pi_{1}^{*}(x^{*},b_{\beta}),a_{\alpha},d_{\tau})\rangle\\
&&=\lim\limits_\alpha\lim\limits_\beta\lim\limits_\tau \langle c^{**},\psi_{x^{*}}(a_{\alpha},d_{\tau},b_{\beta})\rangle\\
&&=\langle \psi_{x^{*}}^{j****j}(a^{**},d^{**},b^{**}),c^{**}\rangle\\
&&=\langle \psi_{x^{*}}^{t****s}(a^{**},d^{**},b^{**}),c^{**}\rangle\\
&&=\lim\limits_\beta\lim\limits_\alpha\lim\limits_\tau\langle c^{**},\psi_{x^{*}}(a_{\alpha},d_{\tau},b_{\beta})\rangle\\
&&=\lim\limits_\beta\lim\limits_\alpha\lim\limits_\tau\langle c^{**},D^{*}(\pi_{1}^{*}(x^{*},b_{\beta}),a_{\alpha},d_{\tau})\rangle\\
&&=\langle \pi_{1}^{***}(b^{**},D^{****}(a^{**},d^{**},c^{**})),x^{*}\rangle.
\end{eqnarray*}
Therefore we have
\begin{eqnarray*}
\langle D^{****}(a^{**},\pi^{***}(b^{**},d^{**}),c^{**}),x^{*}\rangle&=&\lim\limits_{\alpha}\lim\limits_{\beta}\lim\limits_{\tau}\lim\limits_{\gamma} \langle x^{*},D(a_{\alpha},\pi(b_{\beta},d_{\tau}),c_{\gamma})\rangle\\
&=&\lim\limits_{\alpha}\lim\limits_{\beta}\lim\limits_{\tau}\lim\limits_{\gamma} \langle x^{*},\pi_{2}(D(a_{\alpha},b_{\beta},c_{\gamma}),d_{\tau})\rangle\\
&+&\lim\limits_{\alpha}\lim\limits_{\beta}\lim\limits_{\tau}\lim\limits_{\gamma} \langle x^{*},\pi_{1}(b_{\beta},D(a_{\alpha},d_{\tau},c_{\gamma}))\rangle\\
&=&\langle \pi_{2}^{***}(D^{****}(a^{**},b^{**},c^{**}),d^{**}),x^{*}\rangle\\
&+&\langle \pi_{1}^{***}(b^{**},D^{****}(a^{**},d^{**},c^{**})),x^{*}\rangle\\
&=&\langle \pi_{2}^{***}(D^{****}(a^{**},b^{**},c^{**}),d^{**})\\
&+&\pi_{1}^{***}(b^{**},D^{****}(a^{**},d^{**},c^{**})),x^{*}\rangle.
\end{eqnarray*}
Thus 
\begin{eqnarray*}
D^{****}(a^{**},\pi^{***}(b^{**},d^{**}),c^{**})&=&\pi_{2}^{***}(D^{****}(a^{**},b^{**},c^{**}),d^{**})\\
&+&\pi_{1}^{***}(b^{**},D^{****}(a^{**},d^{**},c^{**})).
\end{eqnarray*}
Now we show that
\begin{eqnarray*}
D^{****}(a^{**},b^{**},\pi^{***}(c^{**},d^{**}))&=&\pi_{2}^{***}(D^{****}(a^{**},b^{**},c^{**}),d^{**})\\
&+&\pi_{1}^{***}(c^{**},D^{****}(a^{**},b^{**},d^{**})).
\end{eqnarray*}
Using the equality $\psi_{x^{*}}^{t****s}=\psi_{x^{*}}^{****}$ we have
\begin{eqnarray*}
&&\lim\limits_{\alpha}\lim\limits_{\beta}\lim\limits_{\gamma}\lim\limits_{\tau} \langle x^{*},\pi_{1}(c_{\gamma},D(a_{\alpha},b_{\beta},d_{\tau}))\rangle\\
&&=\lim\limits_{\alpha}\lim\limits_{\beta}\lim\limits_{\gamma}\lim\limits_{\tau} \langle \pi_{1}^{*}(x^{*},c_{\gamma}),D(a_{\alpha},b_{\beta},d_{\tau})\rangle\\
&&=\lim\limits_{\alpha}\lim\limits_{\beta}\lim\limits_{\gamma}\lim\limits_{\tau} \langle D^{*}(\pi_{1}^{*}(x^{*},c_{\gamma}),a_{\alpha},b_{\beta}),d_{\tau}\rangle\\
&&=\lim\limits_{\alpha}\lim\limits_{\beta}\lim\limits_{\gamma}\langle d^{**},D^{*}(\pi_{1}^{*}(x^{*},c_{\gamma}),a_{\alpha},b_{\beta})\rangle\\
&&=\lim\limits_{\alpha}\lim\limits_{\beta}\lim\limits_{\gamma}\langle d^{**},\psi_{x^{*}}(a_{\alpha},b_{\beta},c_{\gamma})\rangle\\
&&=\langle \psi_{x^{*}}^{****}(a^{**},b^{**},c^{**}),d^{**}\rangle\\
&&=\langle \psi_{x^{*}}^{t****s}(a^{**},b^{**},c^{**}),d^{**}\rangle\\
&&=\lim\limits_{\gamma}\lim\limits_{\alpha}\lim\limits_{\beta}\langle d^{**},\psi_{x^{*}}(a_{\alpha},b_{\beta},c_{\gamma})\rangle\\
&&=\lim\limits_{\gamma}\lim\limits_{\alpha}\lim\limits_{\beta}\langle d^{**},D^{*}(\pi_{1}^{*}(x^{*},c_{\gamma}),a_{\alpha},b_{\beta})\rangle\\
&&=\langle \pi_{1}^{***}(c^{**},D^{****}(a^{**},b^{**},d^{**})),x^{*}\rangle.
\end{eqnarray*}
So it follows that
\begin{eqnarray*}
\langle D^{****}(a^{**},b^{**},\pi^{***}(c^{**},d^{**})),x^{*}\rangle&&=\lim\limits_{\alpha}\lim\limits_{\beta}\lim\limits_{\gamma}\lim\limits_{\tau} \langle x^{*},D(a_{\alpha},b_{\beta},\pi(c_{\gamma},d_{\tau}))\rangle\\
&&=\lim\limits_{\alpha}\lim\limits_{\beta}\lim\limits_{\gamma}\lim\limits_{\tau} \langle x^{*},\pi_{2}(D(a_{\alpha},b_{\beta},c_{\gamma}),d_{\tau})\rangle\\
&&+\lim\limits_{\alpha}\lim\limits_{\beta}\lim\limits_{\gamma}\lim\limits_{\tau} \langle x^{*},\pi_{1}(c_{\gamma},D(a_{\alpha},b_{\beta},d_{\tau}))\rangle\\
&&=\langle \pi_{2}^{***}(D^{****}(a^{**},b^{**},c^{**}),d^{**}),x^{*}\rangle\\
&&+\langle \pi_{1}^{***}(c^{**},D^{****}(a^{**},b^{**},d^{**})),x^{*}\rangle\\
&&=\langle \pi_{1}^{***}(c^{**},D^{****}(a^{**},b^{**},d^{**}))\\
&&+\pi_{1}^{***}(c^{**},D^{****}(a^{**},b^{**},d^{**})),x^{*}\rangle.
\end{eqnarray*}
For the converse,  We prove only $\varphi_{a}^{r****r}=\varphi_{a}^{i****i}$, the other part has the same argument. let $D^{****}$ be a tri-derivation. Then we have 
\begin{eqnarray*}
D^{****}(\pi^{***}(a^{**},d^{**}),b^{**},c^{**})&=&\pi_{2}^{***}(D^{****}(a^{**},b^{**},c^{**}),d^{**})\\
&+&\pi_{1}^{***}(a^{**},D^{****}(d^{**},b^{**},c^{**})).
\end{eqnarray*}
In the other hand
\begin{eqnarray*}
D^{****}(\pi^{***}(a^{**},d^{**}),b^{**},c^{**})&=&w^{*}-\lim\limits_{\alpha}\lim\limits_{\tau}\lim\limits_{\beta}\lim\limits_{\gamma} D(\pi(a_{\alpha},d_{\tau}),b_{\beta},c_{\gamma})\\
&=&w^{*}-\lim\limits_{\alpha}\lim\limits_{\tau}\lim\limits_{\beta}\lim\limits_{\gamma} \pi_{2}(D(a_{\alpha},b_{\beta},c_{\gamma}),d_{\tau})\\
&+&\pi_{1}^{***}(a^{**},D^{****}(d^{**},b^{**},c^{**})).
\end{eqnarray*}
Therefore 
$$\pi_{2}^{***}(D^{****}(a^{**},b^{**},c^{**}),d^{**})=w^{*}-\lim\limits_{\alpha}\lim\limits_{\tau}\lim\limits_{\beta}\lim\limits_{\gamma} \pi_{2}(D(a_{\alpha},b_{\beta},c_{\gamma}),d_{\tau}).$$
In particular,
$$\pi_{2}^{***}(D^{****}(a^{**},b^{**},c^{**}),d^{**})=w^{*}-\lim\limits_{\tau}\lim\limits_{\beta}\lim\limits_{\gamma} \pi_{2}(D(a,b_{\beta},c_{\gamma}),d_{\tau}).$$
Thus we have
\begin{eqnarray*}
\langle \varphi_{a}^{r****r}(c^{**},b^{**},d^{**}),x^{*}\rangle &=&\lim\limits_{\tau}\lim\limits_{\beta}\lim\limits_{\gamma}\langle \varphi_{a}(c_{\gamma},b_{\beta},d_{\tau}),x^{*}\rangle\\
&=&\lim\limits_{\tau}\lim\limits_{\beta}\lim\limits_{\gamma}\langle \pi_{2}(D(a,b_{\beta},c_{\gamma}),d_{\tau}),x^{*}\rangle\\
&=&\langle \pi_{2}^{***}(D^{****}(a,b^{**},c^{**}),d^{**}),x^{*}\rangle\\
&=&\lim\limits_{\beta}\lim\limits_{\gamma}\lim\limits_{\tau}\langle \pi_{2}(D(a,b_{\beta},c_{\gamma}),d_{\tau}),x^{*}\rangle\\
&=&\lim\limits_{\beta}\lim\limits_{\gamma}\lim\limits_{\tau}\langle\varphi_{a}(c_{\gamma},b_{\beta},d_{\tau}),x^{*}\rangle\\
&=&\langle\varphi_{a}(c^{**},b^{**},d^{**}),x^{*}\rangle.
\end{eqnarray*}
Therefore $\varphi_{a}^{r****r}=\varphi_{a}^{i****i}$.
\end{proof}
As an immediate consequence of Theorem \ref{theorem8}, we deduce the next result.
\begin{corollary}\label{corollary9}
If the maps $\varphi_{a}$ and $\psi_{x^{*}}$ are completely regular, then fourth adjoint $D^{****}:(A^{**},\odot)\times(A^{**}\odot)\times(A^{**},\odot)\longrightarrow X^{**}$ is a tri-derivation where $\odot\in \{\square , \lozenge\}$.
\end{corollary}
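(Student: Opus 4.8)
The plan is to derive the corollary directly from Theorem \ref{theorem8} by unwinding Definition \ref{definition1}. Recall that, by that definition, a bounded tri-linear map $f$ is completely regular precisely when all six of its natural extensions coincide, namely $f^{i****i}=f^{j****j}=f^{r****r}=f^{****}=f^{t****s}=f^{s****t}$. So the entire task is to read off, from this single chain of equalities, the two specific hypotheses that Theorem \ref{theorem8} requires.

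First I would apply complete regularity to $\varphi_{a}$. Since each of $\varphi_{a}^{r****r}$, $\varphi_{a}^{i****i}$ and $\varphi_{a}^{s****t}$ occurs in the common chain, we obtain at once $\varphi_{a}^{r****r}=\varphi_{a}^{i****i}=\varphi_{a}^{s****t}$. In the same way, complete regularity of $\psi_{x^{*}}$ yields $\psi_{x^{*}}^{j****j}=\psi_{x^{*}}^{t****s}=\psi_{x^{*}}^{****}$. These are exactly the two conditions appearing in the statement of Theorem \ref{theorem8}, so that theorem immediately gives that $D^{****}$ is a tri-derivation when $A^{**}$ carries the first Arens product $\square$.

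It then remains to handle the second Arens product $\lozenge$. The key observation is that, in the proof of Theorem \ref{theorem8}, the choice of Arens product enters \emph{only} through the order in which the iterated weak$^{*}$-limits over the nets $\{a_{\alpha}\}$, $\{b_{\beta}\}$, $\{c_{\gamma}\}$, $\{d_{\tau}\}$ are taken. I would therefore re-run each of the three defining identities with the limit order corresponding to $\lozenge$. Because $\varphi_{a}$ and $\psi_{x^{*}}$ are completely regular, every iterated-limit ordering of $\pi_{2}(D(a_{\alpha},b_{\beta},c_{\gamma}),d_{\tau})$ and of $D^{*}(\pi_{1}^{*}(x^{*},b_{\beta}),a_{\alpha},d_{\tau})$ collapses to one and the same second-dual value; hence every computation carried out in the proof of Theorem \ref{theorem8} goes through verbatim with $\lozenge$ in place of $\square$. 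This proves that $D^{****}$ is a tri-derivation for $\odot=\lozenge$ as well, completing the two cases $\odot\in\{\square,\lozenge\}$.

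The main obstacle I anticipate is precisely the bookkeeping in this $\lozenge$ case: one must verify that complete regularity of $\varphi_{a}$ and $\psi_{x^{*}}$ is strong enough to absorb \emph{all} reorderings of limits introduced by the second Arens product, not merely the specific orderings named in Theorem \ref{theorem8}. This is exactly where the full force of Definition \ref{definition1}---the equality of \emph{all} natural extensions, rather than only the pairs used in the $\square$ argument---does the essential work, and it is why the hypothesis is stated as complete regularity rather than as the two weaker equalities alone.
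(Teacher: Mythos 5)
Your $\square$ half is correct and is precisely the paper's (essentially unwritten) argument: the paper offers nothing beyond the words ``immediate consequence of Theorem \ref{theorem8}'', and indeed, since all six natural extensions occur in the single chain of Definition \ref{definition1}, complete regularity of $\varphi_{a}$ and $\psi_{x^{*}}$ yields in particular $\varphi_{a}^{r****r}=\varphi_{a}^{i****i}=\varphi_{a}^{s****t}$ and $\psi_{x^{*}}^{j****j}=\psi_{x^{*}}^{t****s}=\psi_{x^{*}}^{****}$, so Theorem \ref{theorem8} applies and $D^{****}$ is a tri-derivation for $\odot=\square$.

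The $\lozenge$ half has a genuine gap, exactly at the point you flagged and then waved away. Complete regularity of a tri-linear map equates its \emph{six} natural extensions, i.e.\ the six orderings of the three limits over its own arguments; it says nothing about the remaining orderings of the \emph{four} iterated limits that actually occur in the proof of Theorem \ref{theorem8}, where a fourth limit is present (the index $\alpha$ of $\varphi_{a_{\alpha}}$, resp.\ the pairing variable $c_{\gamma}$ for $\psi_{x^{*}}$). In the $\square$ case that fourth limit never moves: $\alpha$ stays outermost in every $\varphi$-computation and $\gamma$ stays innermost in every $\psi$-computation, which is exactly why the two hypotheses of Theorem \ref{theorem8} suffice there. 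In the $\lozenge$ case it does move. Weak$^{*}$-continuity forces
$$D^{****}(\pi^{r***r}(a^{**},d^{**}),b^{**},c^{**})=w^{*}-\lim\limits_{\tau}\lim\limits_{\alpha}\lim\limits_{\beta}\lim\limits_{\gamma}D(\pi(a_{\alpha},d_{\tau}),b_{\beta},c_{\gamma}),$$
while the corresponding right-hand term expands as
$$\pi_{1}^{r***r}(a^{**},D^{****}(d^{**},b^{**},c^{**}))=w^{*}-\lim\limits_{\tau}\lim\limits_{\beta}\lim\limits_{\gamma}\lim\limits_{\alpha}\pi_{1}(a_{\alpha},D(d_{\tau},b_{\beta},c_{\gamma})).$$
Writing $\langle x^{*},\pi_{1}(a_{\alpha},D(d_{\tau},b_{\beta},c_{\gamma}))\rangle=\langle\psi_{x^{*}}(d_{\tau},b_{\beta},a_{\alpha}),c_{\gamma}\rangle$, the $\pi_{1}$-part of the first display is $\langle\psi_{x^{*}}^{j****j}(d^{**},b^{**},a^{**}),c^{**}\rangle$ (pairing limit $\gamma$ innermost), but in the second display the limit over $a_{\alpha}$ is innermost and the pairing limit over $c_{\gamma}$ is not; that expression is \emph{not} the pairing of any natural extension of $\psi_{x^{*}}$ with $c^{**}$, and equating the two is an Arens-regularity demand on the bilinear slices $(a,c)\mapsto\langle\psi_{x^{*}}(d,b,a),c\rangle$ which complete regularity of $\psi_{x^{*}}$ (or of $\varphi_{a}$) simply does not supply. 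So ``every computation goes through verbatim with $\lozenge$'' is an overclaim: the $\lozenge$ case needs either a mirror-image of Theorem \ref{theorem8}, whose hypotheses involve the companion maps built from the opposite module actions (e.g.\ $(c,b,d)\mapsto\pi_{1}(d,D(a,b,c))$) and are not consequences of the stated hypotheses, or a genuinely different argument. To be fair, the paper is silent on this too; but as a proof of the statement as written, your second half does not close.
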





\end{document}